\documentclass{article}
\usepackage[T2A]{fontenc}
\usepackage[utf8]{inputenc}
\usepackage{amsfonts}
\usepackage{epigraph}
\usepackage[margin=1cm]{geometry}

\usepackage[english]{babel}

\usepackage{amsfonts}
\usepackage{amssymb}
\usepackage{amsmath}
\usepackage{amssymb}
\usepackage{float}

\usepackage{amsthm}
\usepackage[pdftex]{graphicx}
\usepackage{hyperref}

\usepackage{wrapfig}
\usepackage{cancel}
\usepackage{xcolor}

\newtheorem{theorem}{Theorem}
\newtheorem{definition}{Definition}
\newtheorem{conjecture}{Conjecture}
\newtheorem{lemma}{Lemma}
\newtheorem{corollary}{Corollary}
\newtheorem{proposition}{Proposition}
\newtheorem{example}{Example}

\DeclareMathOperator{\supp}{supp}
\DeclareMathOperator{\diam}{diam}

\title{Independence numbers of Johnson-type graphs}

\author{Danila Cherkashin, Sergei Kiselev}

\author{Danila Cherkashin\footnote{Chebyshev Laboratory, St.~Petersburg State University, 14th Line V.O., 29B, Saint Petersburg 199178 Russia}~\footnote{Institute of Mathematics and Informatics,
Bulgarian Academy of Sciences
Sofia 1113, 8 Acad. G. Bonchev str.}, 
Sergei Kiselev\footnote{Laboratory of Combinatorial and Geometric Structures, Moscow Institute of Physics and Technology, Institutsky lane 9, Dolgoprudny, Moscow region, 141700, Russia.}}

\begin{document}

\maketitle

\begin{abstract}
We consider a family of distance graphs in $\mathbb{R}^n$ and find its independence numbers in some cases.

Define the graph $J_{\pm}(n,k,t)$ in the following way: the vertex set consists of all vectors from $\{-1,0,1\}^n$ with exactly $k$ nonzero coordinates; edges connect the pairs of vertices with scalar product $t$.
We find the independence number of $J_{\pm}(n,k,t)$ for an odd negative $t$ and $n > n_0 (k,t)$.

\end{abstract}

\section {Introduction}

We start with common definitions. Let $G = (V,E)$ be a graph.
A subset $I$ of vertices of $G$ is \textit{independent} if no edge connects vertices of $I$.
The \textit{independence number} of a graph $G$ is the maximal size of an independent set in $G$; we denote it by $\alpha (G)$.

Generalized Johnson graphs are the graphs $J(n,k,t)$ defined as follows: the vertex set consists of vectors from the hypercube $\{0,1\}^n$ with exactly $k$ nonzero coordinates, 
edges connect vertices with scalar product $t$ (so $J(n,k,t)$ is nonempty if  $k < n$ and $2k-n \leq t < k$).
Generalized Kneser graphs $K(n,k,t)$ have the same vertex set but the edges connect vertices with scalar product at most $t$.

Now we introduce the main hero of the paper.
Define graphs $J_{\pm}(n,k,t)$ as follows: the vertex set consists of vectors from $\{-1,0,1\}^n$ with exactly $k$ nonzero coordinates, 
edges connect vertices with scalar product $t$. The graph $J_{\pm}(n,k,t)$ is nonempty if $k < n$ and $-k \leq t < k$, and also if $k = n$ and $n - t$ is even. If $t = -k$ then the graph $J_{\pm}(n,k,t)$ is a matching. Note that the edges connect vertices of the Euclidean distance $\sqrt{2(k-t)}$, which means that $J_{\pm}(n,k,t)$ is a distance graph.

Finally $K_\pm(n,k,t)$ shares the vertex set with $J_{\pm}(n,k,t)$ but the edges connect vertices with scalar product at most $t$.

\subsection{Independence and chromatic numbers of \texorpdfstring{$J(n,k,t)$}{J(n,k,t)} and \texorpdfstring{$K(n,k,t)$}{K(n,k,t)}}

Independent sets in these families of graphs are classical combinatorial objects. 
The celebrated Erd{\H o}s--Ko--Rado theorem~\cite{erdos1961intersection} determines all maximal independent sets in $J(n,k,0) = K(n,k,0)$.
Then the Frankl--Wilson theorem~\cite{frankl1981intersection}, the Frankl--F{\" u}redi theorem~\cite{frankl1985forbidding} and the Ahlswede--Khachatryan Complete Intersection Theorem~\cite{ahlswede1997complete} answered a lot of questions about the size and the structure of maximal independent sets in the graphs $J(n,k,t)$ and $K(n,k,t)$.

On the other hand a lot of questions in combinatorial geometry are related to embeddings of these graphs into $\mathbb{R}^n$. Frankl and Wilson~\cite{frankl1981intersection} used the graphs $J(n,k,t)$ to get an exponential lower bound on the chromatic number of the Euclidean space (Nelson--Hadwiger problem); Kahn and Kalai~\cite{kahn1993counterexample} used them to disprove Borsuk's conjecture.

Let us describe the picture for some small $k$ and $t$. Erd{\H o}s, Ko and Rado~\cite{erdos1961intersection} proved that $n \geq 2k$ implies
\[
\alpha[J(n,k,0)] = \binom{n-1}{k-1}.
\]
Then Lov{\' a}sz~\cite{lovasz1978kneser} proved Kneser conjecture $\chi [J(n,k,0)] = n - 2k + 2$ for $n \geq 2k$.
The following result was introduced to get a constructive bound on the Ramsey number.
\begin{proposition}[Nagy,~\cite{nag}]
\label{nagylemma}
Let $n = 4s+t$, where $0 \leq t \leq 3$. Then
\[
\alpha[J(n,3,1)] = \begin{cases} 
n      & \mbox{ if } ~ t = 0, \\ 
n-1    & \mbox{ if } ~ t = 1, \\
n-2    & \mbox{ if } ~ t = 2 \mbox{ or } 3.
\end{cases} 
\]
\end{proposition}
Then Larman and Rogers~\cite{larman1972realization} used the bound $\chi [J(n,3,1)] \geq \frac{|V [J(n,3,1)]|}{\alpha [J(n,3,1)]}$ to show that the chromatic number of the Euclidean space is at least quadratic in the dimension (initially it was proposed by Erd{\H o}s and S{\'o}s).
It turns out that the chromatic number of $J(n,3,1)$ is very close to $\frac{|V [J(n,3,1)]|}{\alpha [J(n,3,1)]}$ (and sometimes is equal to this ratio).
\begin{theorem}[Balogh--Kostochka--Raigorodskii~\cite{balogh2013coloring}]
Consider  $l \geq 2$. If $n = 2^l$ then
\[
\chi[J(n,3,1)] \leq \frac{(n - 1)(n - 2)}{6}.
\]
If $n = 2^l - 1$ then
\[
\chi[J(n,3,1)] \leq \frac{n(n - 1)}{6}.
\]
Finally, for an arbitrary $n$
\[
\chi[J(n,3,1)] \leq \frac{(n - 1)(n - 2)}{6} + \frac{11}{2}n.
\]
\end{theorem}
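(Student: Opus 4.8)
The plan is to view a proper coloring of $J(n,3,1)$ as a partition of the $\binom n3$ triples into independent sets. By Lemma~\ref{nagylemma} and the structural remark after it, each color class is a disjoint union of \emph{quads} and isolated triples (or a single star), so the efficient way to fill one color is to pack into it several pairwise vertex-disjoint quads, i.e. to partition part of $[n]$ into $4$-sets and color their four triples alike. Since the target $\frac{(n-1)(n-2)}6=\binom n3/n$ is exactly the number of triples divided by $n$, the natural object is a \emph{resolvable Steiner quadruple system}: a partition of all triples into quads (an $S(3,4,n)$) whose blocks are grouped into parallel classes, each class being a partition of $[n]$ into $n/4$ disjoint $4$-sets. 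Each parallel class is then an independent set, and the number of classes is $\bigl(\binom n3/4\bigr)\big/(n/4)=\frac{(n-1)(n-2)}6$; so it suffices to build such a system.

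For $n=2^k$ I would realize this in the affine geometry $AG(k,2)$: identify $[n]$ with $\mathbb F_2^k$ and take as blocks the $2$-flats, the $4$-sets $\{a,b,c,d\}$ with $a+b+c+d=0$. Every triple $\{a,b,c\}$ lies in the unique block $\{a,b,c,a+b+c\}$, so the blocks form an $S(3,4,n)$; grouping them by \emph{direction}, the $2$-dimensional subspace $\{0,a+b,a+c,b+c\}$ spanned by the differences, partitions the blocks into parallel classes, one for each $2$-subspace of $\mathbb F_2^k$. The cosets of a fixed $2$-subspace partition $\mathbb F_2^k$ into disjoint flats, so each class is independent, and the number of $2$-subspaces is the Gaussian binomial $\binom{k}{2}_2=\frac{(2^k-1)(2^k-2)}6=\frac{(n-1)(n-2)}6$, giving the first bound. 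For $n=2^k-1$ I would run the same construction on $\mathbb F_2^k$ and then delete one point $p$: in each of the $\frac{(N-1)(N-2)}6=\frac{n(n-1)}6$ classes (with $N=2^k$) the point $p$ lies in a single flat, which loses $p$ and degenerates into an isolated triple while the other flats stay intact; the class remains independent and covers exactly the triples of $[n]$ assigned to it, yielding $\frac{n(n-1)}6$ colors.

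For arbitrary $n$ I would first pass to the largest $n'\le n$ for which a resolvable $S(3,4,n')$ exists (these occur in every residue window of bounded length, so $n-n'=O(1)$), color the triples inside a fixed $[n']$ with its $\frac{(n'-1)(n'-2)}6\le\frac{(n-1)(n-2)}6$ parallel classes, and then dispose of the triples meeting the bounded set $R=[n]\setminus[n']$ with extra colors. Triples $\{x,x',j\}$ with $x,x'\in R$ form a star on the pair $\{x,x'\}$ and cost one color per pair of $R$; triples through a single $x\in R$ correspond to edges of $K_{n'}$, and partitioning $E(K_{n'})$ into $n'-1$ stars (each an intersecting, hence independent, family of triples through $x$) costs $n'-1$ fresh colors per $x$; triples inside $R$ are $O(1)$ many. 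The main obstacle is precisely this last bookkeeping: one must choose $n'$ and organize the star-decompositions so that the total overhead is at most $\frac{11}2 n$ rather than the crude $|R|\cdot n$ produced by the naive count, which requires a sharp bound on the gap $n-n'$ together with some sharing of colors among the links of the points of $R$.
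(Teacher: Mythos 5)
This theorem is quoted in the paper from Balogh--Kostochka--Raigorodskii \cite{balogh2013coloring}; the paper itself contains no proof of it, so your attempt can only be judged on its own merits. Your treatment of the first two bounds is correct and complete, and it is the standard argument: the $4$-sets $\{a,b,c,d\}\subset\mathbb F_2^k$ with $a+b+c+d=0$ form a Steiner quadruple system resolved by the $2$-dimensional subspaces, each parallel class (cosets of a fixed $2$-subspace) is an independent set since any two of its triples meet in $0$ or $2$ points, and the count of $2$-subspaces is $\frac{(2^k-1)(2^k-2)}{6}$; deleting a point handles $n=2^k-1$. No complaints there.

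The third bound, however, has a genuine gap, which you yourself flag but do not close. Follow your own accounting: with $n'$ the largest integer $\le n$ admitting a resolvable $S(3,4,n')$ (by Hartman's theorem these are exactly $n'\equiv 4,8 \pmod{12}$, so the gap $r=n-n'$ can be as large as $7$), coloring the link of each $x\in R$ separately costs $n'-1$ colors per point --- and this is essentially forced if the links get disjoint color sets, since the triples through a fixed $x$ form a Kneser graph $K(n',2)$ with chromatic number $n'-2$. The overhead is then about $7n$, which overshoots the claimed $\frac{11}{2}n$. The missing idea is precisely the color sharing you allude to: a single color class can simultaneously serve several points of $R$ if it assigns to each $x\in R$ a star of pairs and these stars are pairwise vertex-disjoint in $[n']$ (then triples through distinct points of $R$ meet in $0$ or $2$ places). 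Per class the stars then cover at most $n'-r$ pairs in total, so one needs roughly $r\binom{n'}{2}/(n'-r)\approx \frac{r}{2}n$ classes, which would comfortably beat $\frac{11}{2}n$ for $r\le 7$ --- but this requires exhibiting, for each $x\in R$, a star decomposition of $E(K_{n'})$ such that the stars used in the same class are vertex-disjoint across the points of $R$, and you give no such construction. Until that (or some substitute, e.g. a finer choice of $n'$ with a provably smaller gap) is supplied, the bound $\frac{(n-1)(n-2)}{6}+\frac{11}{2}n$ is not established; what you have proves only $\frac{(n-1)(n-2)}{6}+7n+O(1)$, and even that rests on the nontrivial existence theorem for resolvable Steiner quadruple systems, which should be cited explicitly rather than asserted.
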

Tort~\cite{tort1983probleme} proved that 
\[
\chi[K(n,3,1)] = \left [ \frac{(n-1)^2}{4} \right]
\]
for $n \geq 6$.
Zakharov~\cite{zakharov2020chromatic} show that the existence of Steiner systems (see Subsection~\ref{steiner}) implies that
\[
\chi [J(n,k,t)] \leq (1+o(1)) \frac{(k-t-1)!}{(2k-2t-1)!} n^{k-t}
\]
for fixed $k > t$. 
In general $\chi [J(n,k,t)] = \Theta (n^{t+1})$ for $k > 2t + 1$ and
$\chi [J(n,k,t)] = \Theta (n^{k-t})$ for $k \leq 2t + 1$.

\subsection{Known facts about the graphs \texorpdfstring{$J_{\pm}(n,k,t)$}{J(n,k,t)} and \texorpdfstring{$K_{\pm}(n,k,t)$}{K(n,k,t)}}

For the sake of convenience we use the following combinatorial notation.
A \textit{support} of a vertex $v$ is the set of nonzero coordinates of $v$; we denote it by $\supp v$. 
Let $\mathcal{H}_k = (V_k,E_k)$ be a $k$-graph such that
\[
V(\mathcal{H}) :=  \left \{u^{sign}\,\big|\,u\in [n], sign \in \{+,-\} \right \} \quad \quad E(\mathcal{H}) := \left \{A \subset \binom{V(\mathcal{H})}{k}  \,\bigg|\, \{u^+,u^-\} \not\subset A \mbox{ for every } u  \right \} .
\]
There is a natural bijection between $E_k$ and $V(J_{\pm}(n,k,t))$.
Introduce notion \textit{signplace} for a vertex of $\mathcal H_k$ and \textit{place} for a pair of vertices $\{u^+,u^-\}$, $u \in [n]$; note that the latter definition does not depend on $k$.

From a geometrical point of view $J_{\pm}(n,k,t)$ is a natural generalization of $J(n,k,t)$.
Raigorodskii~\cite{raigorodskii2000chromatic, raigorodskii2001borsuk} used the graphs $J_{\pm}(n,k,t)$ to significantly refine
the asymptotic lower bounds in the Borsuk's problem and the Nelson--Hadwiger problem.

Unfortunately, there is no general method to find the independence number of $J_{\pm}(n,k,t)$ even asymptotically. 
One of the reasons is that the known answers have varied and sometimes rather complicated structure.
For instance the proof of the following result analogous to Proposition~\ref{nagylemma} is relatively long and the answer is quite surprising.
\begin{theorem} [Cherkashin--Kulikov--Raigorodskii, \cite{cherkashin2018chromatic}]
For $n \geq 1$ define $c(n)$ as follows:
\[
c(n) = \begin{cases} 
0    & \mbox{ if } ~ n \equiv 0 \\ 
1    & \mbox{ if } ~ n \equiv 1 \\
2    & \mbox{ if } ~ n \equiv 2 \mbox{ or } 3
\end{cases} \pmod{4}.
\]
Then 
\[
\alpha[J_\pm(n, 3, 1)] = \max \{6n-28, 4n - 4c (n)\}.
\]
\label{theo1}
\end{theorem}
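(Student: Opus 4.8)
The plan is to first translate adjacency into a purely combinatorial condition on supports and sign vectors, then give explicit extremal families for both bounds, and finally prove the matching upper bound, which is where essentially all of the difficulty lies. To record the adjacency, write $a$ for the number of common coordinates where the signs agree and $b$ for the number where they disagree; then $\langle u,v\rangle=a-b$ with $a+b=|\supp u\cap\supp v|\le 3$, so $\langle u,v\rangle=1$ forces $(a,b)=(1,0)$ or $(a,b)=(2,1)$. Thus there are exactly two types of edges: (Type I) the supports meet in a single coordinate and the signs there agree; (Type II) the supports coincide and the two sign vectors differ in exactly one place. Consequently, for a fixed support the sign patterns allowed in an independent set form an independent set in the cube $Q_3$, so at most $4$ patterns per support, and attaining $4$ (an even- or odd-weight class) uses both signs in every one of the three coordinates.

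For the lower bound I would exhibit two families. For $4n-4c(n)$, partition the coordinates into quadruples and on each quadruple take all four $3$-subsets with a maximum ($=4$) sign class each: any two distinct triples inside a quadruple meet in two coordinates, so they are never joined (neither Type I nor Type II), giving $16$ vertices per quadruple; the three leftover coordinates when $n\equiv 3$ carry one extra triple, and a short check per residue reproduces $4n-4c(n)$. For $6n-28$ the mechanism is different: a support carrying only $2$ patterns can be made \emph{monochromatic} in one of its coordinates, and two such supports overlapping only there with opposite signs are non-adjacent, so single-coordinate overlaps become usable. I would assemble a balanced family in which every coordinate plays both the monochromatic ``head'' role for a bounded number of supports and the ordinary role for others, so that the savings from sharing coordinates push the ratio of vertices to coordinates up to $6$, the additive $-28$ absorbing the boundary of the periodic pattern. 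I expect arranging this family so that \emph{no} unintended Type I edge appears to be the most delicate part of the construction.

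The core of the proof is the upper bound. Given a maximum independent set $I$, call a support \emph{rich} if it carries at least $3$ patterns and \emph{poor} otherwise. A rich support uses both signs in each of its coordinates, so by the Type I rule it can meet every other occurring support in $0$ or $2$ coordinates only. I would then prove a structure lemma: a family of $3$-sets with all pairwise intersections in $\{0,2\}$ splits into vertex-disjoint ``books'' (triples through a fixed pair) and copies of $K_4$ (all four triples of a $4$-set), each contributing at most $4$ vertices per coordinate it occupies; moreover no poor support may meet the union of the rich coordinates in a single coordinate. This already caps the all-rich situation at $4n$ and forces a clean separation between the rich region and the remainder.

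It remains to bound the poor part, and this is the step I expect to be the true obstacle. Here I would attach each poor support to its monochromatic coordinate and exploit that, at a fixed coordinate, the supports headed there with a given sign must pairwise meet in two coordinates (otherwise a same-sign single overlap recreates a Type I edge), whereas opposite signs are forced for every genuine single overlap. A weighting/discharging argument over the coordinates, tracking how often each coordinate is a head versus an ordinary coordinate, should yield the ratio bound $6$ together with the exact constant; combining the rich and poor estimates then gives $\alpha\le\max\{6n-28,\,4n-4c(n)\}$. The delicate points will be pinning down the additive constants and showing that mixing rich and poor supports can never beat both thresholds simultaneously, i.e.\ that the optimum is always realized by one of the two pure constructions above.
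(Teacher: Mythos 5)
You should first note a point of comparison: this theorem is \emph{quoted} in the paper from \cite{cherkashin2018chromatic} and no proof of it appears here (the authors explicitly remark that the proof of this result is ``rather long''), so there is no internal argument to measure your proposal against; it can only be judged on its own merits. Your preliminary analysis is correct and well organized: the classification of edges into Type I (single-coordinate overlap with agreeing signs) and Type II (equal supports, sign vectors at Hamming distance one), the bound of four sign patterns per support, the observation that a support carrying at least three patterns uses both signs in every coordinate, and the quadruple construction giving $4n-4c(n)$ all check out. But the two places where the theorem actually lives are both missing. First, you never produce an independent set of size $6n-28$, and the mechanism you sketch cannot produce one: a support with two patterns and no internal Type II edge has its patterns at Hamming distance two, hence is monochromatic in \emph{exactly one} coordinate, so it can survive a single-coordinate overlap only at that one ``head''. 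In any family of density $6$ per coordinate some supports must meet other supports in single coordinates at two or even three of their places, which forces those supports to be monochromatic everywhere, i.e.\ to carry a \emph{single} pattern. Indeed, the extremal example is of exactly this shape: fix two disjoint triples $T_1=\{1,2,3\}$, $T_2=\{4,5,6\}$; for every $z\ge 7$ take the six supports $\{i,j,z\}$ with $\{i,j\}\subset T_1$ or $\{i,j\}\subset T_2$, each carrying one sign pattern, with signs chosen so that every single-coordinate overlap sees opposite signs (for instance $(+,+,+)$ on $(1,2,z)$, $(-,+,+)$ on $(1,3,z)$, $(-,-,+)$ on $(2,3,z)$, and the analogous patterns with last sign $-$ for pairs in $T_2$), and add the two $4$-element parity classes on $T_1$ and on $T_2$; this gives $6(n-6)+8=6n-28$ vertices. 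Your two-pattern ``head'' families top out at density $4$, so this half of the theorem is not merely unfinished — the proposed route is a dead end.

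Second, the upper bound, which is the overwhelming bulk of the original proof, is deferred entirely to an unspecified discharging argument (``should yield'', ``I expect''). The rich/poor split and the structure lemma for rich supports (essentially Nagy's decomposition into books and quads) are fine as far as they go, but the exact constants $-28$ and $-4c(n)$, the bound on the poor part, and above all the analysis showing that mixed rich/poor configurations never exceed $\max\{6n-28,\,4n-4c(n)\}$ are precisely the long case analysis that constitutes the known proof, and none of it is carried out. As written, the proposal establishes only $\alpha(n,3,1)\ge 4n-4c(n)$ together with a plausible but unexecuted plan, not the stated equality.
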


In recent papers~\cite{frankl2018erdHos,frankl2018families,frankl2020intersection} Frankl and Kupavskii generalized the Erd{\H o}s--Ko--Rado theorem on some subgraphs of $J_{\pm} (n, k, t)$. 
We need additional definitions.
\[
V_{k,l} := \{v \in \{-1, 0, 1\}^n \  | \ v \mbox{ has exactly } k \ \ '1' \mbox{ and exactly } l \ \  '-1' \}.
\]

\[
 J (n,k,l,s) := (V_{k,l}, \{(v_1,v_2) \ | \ \langle v_1,v_2\rangle = s\} ).
\]
\begin{theorem}[Frankl--Kupavskii,~\cite{frankl2018erdHos}]
For $2k \leq n \leq k^2$ the equality 
\[ 
\alpha[J (n,k,1,-2)] = k \binom{n-1}{k}
\]
holds. In the case $n > k^2$ the following equality holds
\[ 
\alpha[J (n,k,1,-2)] = k \binom{k^2-1}{k} + \sum_{i=k^2}^{n-1} \binom{i}{k}.
\]
\end{theorem}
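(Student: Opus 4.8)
The first step is to rewrite the adjacency of $J(n,k,1,-2)$ in set language. Write a vertex as a pair $(A,x)$, where $A\in\binom{[n]}{k}$ is its positive support and $x\in[n]\setminus A$ its unique negative coordinate. A direct computation of the scalar product gives
\[
(v_1,v_2)=|A\cap B|+[x=y]-[y\in A]-[x\in B],
\]
so $(A,x)$ and $(B,y)$ are adjacent exactly when $A\cap B=\emptyset$, $x\in B$ and $y\in A$ (the condition $x\neq y$ is then automatic). Equivalently, viewing a vertex as a $(k+1)$-set $C=A\cup\{x\}$ with one marked element $x$, two vertices are adjacent iff $|C_1\cap C_2|=2$ and the two common elements are precisely the two marks. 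I would fix this reformulation first, since both the constructions and the extremal analysis are cleanest in it.

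For the lower bound I would exhibit two families. The first is the star $\{(A,x):1\in A\}$; any two members share the element $1$ in their positive supports, so they are never disjoint and hence non-adjacent, and its size is $(n-k)\binom{n-1}{k-1}=k\binom{n-1}{k}$. The second is the mark-the-maximum family, in which each $(k+1)$-set is marked at its largest element: if $x=\max(A\cup\{x\})$ and $y=\max(B\cup\{y\})$ then $y\in A$ and $x\in B$ cannot both hold, so it is independent, of size $\binom{n}{k+1}$. The family claimed extremal for $n>k^2$ is the hybrid that marks the maximum on every $(k+1)$-set meeting $\{k^2+1,\dots,n\}$ and applies the fixed-element star inside $[k^2]$; a short check (the negative coordinates of the two parts lie on opposite sides of $k^2$) shows the union is independent and has exactly the size $k\binom{k^2-1}{k}+\sum_{i=k^2}^{n-1}\binom{i}{k}$. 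I would verify by Pascal's rule that the two stated formulas agree at $n=k^2+1$, pinning down the phase transition.

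For the upper bound I would argue by induction on $n$, peeling off the top element. After a compression reduction (introduce the left-shift on positive supports and the analogous shift on marks, and carry out the routine but necessary verification that each preserves independence), split a maximum independent family $\mathcal F$ into vertices avoiding $n$ — an independent family in $J(n-1,k,1,-2)$, hence of size at most $f(n-1):=\alpha[J(n-1,k,1,-2)]$ — vertices with $x=n$ (at most $\binom{n-1}{k}$), and vertices with $n\in A$. The target is the recursion
\[
f(n)\le\max\Bigl\{\,k\binom{n-1}{k},\ f(n-1)+\binom{n-1}{k}\,\Bigr\},
\]
which, with the base range $2k\le n\le k^2$ where the star term dominates, solves to the two stated formulas. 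The elementary identity $\binom{n-1}{k}=\tfrac{n-1}{k^2}\,k\binom{n-2}{k-1}$ shows that the two increments in the recursion cross exactly at $n-1=k^2$, which is the source of the threshold.

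The main obstacle is the third block — the vertices with $n$ in the positive support. A direct injection fails: two such vertices with a common image are forced to share $n$, hence are non-adjacent, so independence does not make the charging one-to-one, and the star example indeed violates any naive bound $|\mathcal F_1|+|\mathcal F_2|\le\binom{n-1}{k}$. The proof must therefore establish a dichotomy: either $\mathcal F$ is essentially concentrated on a single positive element (yielding the bound $k\binom{n-1}{k}$), or the positive occurrences of $n$ can be charged, through the relation ``$A\cap B=\emptyset$ and $x\in A$'' governing adjacency with the $x=n$ vertices, against distinct $k$-subsets of $[n-1]$, so that the whole $n$-block contributes at most $\binom{n-1}{k}$ jointly with the family on $[n-1]$. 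Proving this dichotomy — and thereby showing that a non-star compressed family cannot afford positive occurrences of $n$ once $n>k^2$ — is the technical heart, and it is here that the $k$ marks available on each $(k+1)$-set must be balanced against the $\binom{n-1}{k}$ available slots to produce the quadratic threshold.
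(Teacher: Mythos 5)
This theorem is one the paper imports from Frankl--Kupavskii (\cite{frankl2018erdHos}) and states without proof, so there is no in-paper argument to compare against; your proposal has to stand on its own. Its first half does: the adjacency reformulation $(A\cap B=\emptyset,\ x\in B,\ y\in A)$, equivalently two $(k+1)$-sets meeting exactly in their two marks, is correct, and both constructions check out --- the star has size $(n-k)\binom{n-1}{k-1}=k\binom{n-1}{k}$, the mark-the-maximum family is independent of size $\binom{n}{k+1}=\sum_{i=k}^{n-1}\binom{i}{k}$, and the hybrid is independent (a star vertex has its whole support in $[k^2]$ while a max-marked vertex has its mark above $k^2$, so $y\in A$ is impossible) with exactly the claimed cardinality. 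The consistency check at $n=k^2+1$ and the identity locating the crossing at $n-1=k^2$ are also right. So the lower bound is fully proved.

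The upper bound, however --- the actual content of the theorem --- is not. You write down the recursion $f(n)\le\max\bigl\{k\binom{n-1}{k},\ f(n-1)+\binom{n-1}{k}\bigr\}$ and correctly note that it would imply both formulas, but this recursion is precisely the hard part, and your text concedes as much: the block of vertices with $n$ in the positive support has trivial bound of order $n\binom{n-2}{k-1}$, no injection into $\binom{[n-1]}{k}$ works, and the ``dichotomy'' (either the family is essentially a star, or the $n$-blocks jointly contribute at most $\binom{n-1}{k}$) is introduced as something the proof \emph{must} establish rather than something you establish. Naming the obstacle is not overcoming it: nothing in the outline excludes a family that is far from a star yet carries many positive occurrences of $n$, and ruling that out --- in Frankl--Kupavskii's actual argument this is done via shifting followed by estimates on cross-intersecting families --- is where the threshold $n\approx k^2$ has to emerge from an inequality, not merely from the crossing point of the two closed-form answers. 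The compression step is likewise only gestured at (``routine but necessary verification''); for signed vertices the mark-shift's preservation of independence genuinely needs proof. As it stands, the proposal is a correct construction plus a plan for the upper bound with its central lemma missing.
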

Paper~\cite{frankl2018families} deals with a more generic problem.
\begin{theorem}[Frankl--Kupavskii,~\cite{frankl2018families}]
For $2k \leq n$ the following bounds hold 
\[ 
\binom{n}{k+l}\binom{k+l-1}{l-1} \leq \alpha[J (n,k,l,-2l)] \leq \binom{n}{k+l}\binom{k+l-1}{l-1} + \binom{n}{2l}\binom{2l}{l}\binom{n-2l-1}{k-l-1}.
\]
In the case $2k \leq n \leq  3k-l$ the following equality holds 
\[ 
\alpha[J (n,k,l,-2l)] = \frac{k}{n} |V_{k,l}|.
\]

\end{theorem}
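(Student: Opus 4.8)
My plan rests on first turning the scalar-product condition into a transparent set-theoretic one. Write $P(v)$ and $N(v)$ for the sets of $+1$- and $-1$-coordinates of $v\in V_{k,l}$, so $|P(v)|=k$, $|N(v)|=l$, and the two sets are disjoint. A short computation shows that for any two vertices the scalar product equals (number of sign-agreements) minus (number of sign-disagreements), and that the number of disagreements never exceeds $2l$ (each vector has only $l$ minus-signs). Hence $-2l$ is the \emph{minimum} possible scalar product, attained exactly when there are no agreements and the maximal number $2l$ of disagreements occurs. This yields the clean adjacency criterion: $v_1\sim v_2$ if and only if $P(v_1)\cap P(v_2)=\emptyset$, $N(v_1)\subseteq P(v_2)$ and $N(v_2)\subseteq P(v_1)$. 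Every subsequent step I carry out with this criterion rather than with scalar products.

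For the lower bounds I exhibit two families. For the general bound, fix for every $(k+l)$-subset $S\subseteq[n]$ its minimal element $m_S$ and take all $v$ with $\supp v=S$ and $m_S\in N(v)$; there are $\binom{k+l-1}{l-1}$ of these per support, hence $\binom{n}{k+l}\binom{k+l-1}{l-1}$ in total. Independence follows from the criterion: two vectors with the same support cannot be adjacent because their positive supports, being two $k$-subsets of a $(k+l)$-set, must meet (here $k>l$); and for two different supports $S_1,S_2$ the conditions $N(v_1)\subseteq P(v_2)$ and $N(v_2)\subseteq P(v_1)$ force $\min S_1=\min S_2=:m$, whence $m$ lies simultaneously in $N(v_1)$ and in $P(v_1)$, a contradiction. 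For the small-range bound I take the plus-star $\{v : 1\in P(v)\}$, of size $\frac{k}{n}|V_{k,l}|$; it is independent because any two of its vectors share the positive coordinate $1$, violating $P(v_1)\cap P(v_2)=\emptyset$.

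The core of the theorem is the matching upper bound $\alpha\le \frac{k}{n}|V_{k,l}|$ in the range $2k\le n\le 3k-l$. I would first record the structural fact that for $l\ge1$ the graph is triangle-free: three pairwise-adjacent vertices would need three pairwise-disjoint positive supports $P_1,P_2,P_3$, and then $N(v_1)\subseteq P_2\cap P_3=\emptyset$, which is impossible. The plan is then to show that some maximum independent family can be transformed, without decreasing its size, into the plus-star, by a signed shifting/compression operator that pushes positive coordinates towards the fixed element $1$ while preserving both membership in $V_{k,l}$ and independence. Rewriting $n\le 3k-l$ as $z:=n-k-l\le 2(k-l)$, so that the number of zero coordinates is at most $2(k-l)$, and noting that an edge already occupies $|P(v_1)\cup P(v_2)|=2k$ coordinates so that at most $z-(k-l)\le k-l$ coordinates stay outside it, this is exactly the regime in which the compression inequalities go the right way; outside it the support-based family can overtake the star, which is why equality is claimed only on the symmetric interval $k-l\le z\le 2(k-l)$.

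For the general upper bound I would split a maximum independent family $\mathcal{F}$ into a \emph{canonical} part, respecting the minimal-minus rule of the support construction, and an \emph{exceptional} part. The canonical part contributes at most $\binom{k+l-1}{l-1}$ per support, giving the main term $\binom{n}{k+l}\binom{k+l-1}{l-1}$, while the exceptional vectors are controlled by a cross-intersection (Bollob\'as-type set-pair) estimate applied to the pairs $(N(v_2),P(v_1))$ forced by the criterion, producing the error term $\binom{n}{2l}\binom{2l}{l}\binom{n-2l-1}{k-l-1}$; one reads off the factors as choosing the $2l$ coordinates carrying the two minus-sets, splitting them into two $l$-sets, fixing one distinguished coordinate, and placing the remaining positive coordinates. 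The main obstacle, and the step I expect to be genuinely hard, is the exact upper bound: engineering the signed compression so that it simultaneously preserves $V_{k,l}$ and independence, and verifying that the resulting inequalities are tight precisely on $k-l\le z\le 2(k-l)$. The careful bookkeeping of the exceptional part in the general upper bound is the secondary difficulty.
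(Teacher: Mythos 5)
Your preliminary work is sound, and it is worth noting that the paper itself contains no proof of this statement (it is quoted as background from Frankl--Kupavskii), so your proposal must stand on its own. The parts you actually carry out are correct: the adjacency criterion $v_1\sim v_2$ iff $P(v_1)\cap P(v_2)=\emptyset$, $N(v_1)\subseteq P(v_2)$, $N(v_2)\subseteq P(v_1)$ (valid because, with $k>l$, the value $-2l$ is the minimum possible scalar product, forcing $2l$ disagreements and zero agreements); the minimal-element construction and its independence verification, giving $\binom{n}{k+l}\binom{k+l-1}{l-1}$; and the star $\{v\colon 1\in P(v)\}$ of size $\frac{k}{n}|V_{k,l}|$. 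The reformulation of the range as $k-l\le n-k-l\le 2(k-l)$ and the triangle-freeness remark are also correct.

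The genuine gap is that both upper bounds --- which are the entire content of the theorem beyond the two easy constructions --- are left as plans rather than proofs. For the exact bound $\alpha\le\frac{k}{n}|V_{k,l}|$ on $2k\le n\le 3k-l$, you invoke a ``signed shifting/compression operator'' but never define it, never prove that any such operation preserves independence (this is exactly the delicate point for signed vectors; the paper remarks that shifting generally fails for Johnson-type graphs, and the only reason it has a chance here is the fact, which you should state explicitly, that scalar product $=-2l$ coincides with scalar product $\le -2l$, making the graph Kneser-type), and never show where the hypothesis $n\le 3k-l$ enters quantitatively --- your heuristic about ``at most $k-l$ coordinates staying outside an edge'' is not connected to any inequality. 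Similarly, for the general upper bound, the ``canonical/exceptional'' split and the ``Bollob\'as-type set-pair estimate'' are only named: you do not specify which vectors are exceptional, what the set-pair system is, or how the term $\binom{n}{2l}\binom{2l}{l}\binom{n-2l-1}{k-l-1}$ emerges from it; your reading of those binomial factors is a gloss on the formula, not a derivation. Since you yourself flag these steps as the genuinely hard ones, the proposal as written establishes only the lower bounds and does not prove the theorem; the triangle-freeness observation, while true, is never used for anything.
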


To introduce the next result, we will need the following definition.
\begin{definition}
\label{def S}
\[
S(n, D) :=
\begin{cases}
\sum_{j=0}^d \binom{n}{j} & \mbox{ if } D = 2d,\\
\binom{n-1}{d} + \sum_{j=0}^d \binom{n}{j} & \mbox{ if } D = 2d + 1.
\end{cases}
\]
\end{definition}

In \cite{frankl2016intersection} (see~\cite{frankl2019correction} for the version with a fixed mistake) Frankl and Kupavskii 
determined the independence number of $K_{\pm}(n,k,t)$ for $n > n_0(k,t)$ and found the asymptotics of the independence number of $J_{\pm}(n,k,t)$ if $t < 0$ and $n > n_0 (k,t)$.

\begin{theorem}[Frankl--Kupavskii,~\cite{frankl2019correction}]
For any $k$ and $n \geq n(k_0)$ we have:
\begin{enumerate}
    \item $\alpha[K_\pm(n, k, t)] = \binom{n-t-1}{k-t-1}$ 
    \qquad\qquad\qquad\qquad\qquad\qquad\quad for $-1 \leq t \leq k-1$,
    \item $\alpha[K_\pm(n, k, t)] = S(k, |t|-1) \binom{n}{k}$ 
    \qquad\qquad\qquad\qquad\qquad for odd $t$ such that $-k-1 \leq t < 0$,
    \item $\alpha[K_\pm(n, k, t)] = \alpha[J(n, k - \frac{|t|}{2}, \frac{|t|}{2}, t)] + S(k, |t|-2) \binom{n}{k}$
    \quad for even $t$ such that $-k-1 \leq t < 0$.
\end{enumerate}
\label{FK_Kneser} 
\end{theorem}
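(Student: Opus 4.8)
The plan is to exploit the decomposition of the scalar product into sign-agreements and sign-disagreements. For two vertices $v,w$ with supports $A=\supp v$ and $B=\supp w$, write $\langle v,w\rangle = p-q$, where $p$ counts the coordinates of $A\cap B$ on which $v$ and $w$ carry equal signs and $q$ counts those on which they carry opposite signs. Since $K_\pm$ joins pairs whose scalar product is at most $t$, an independent set $\mathcal F$ is exactly a family in which every pair satisfies $\langle v,w\rangle\ge t+1$. The key reduction is to group $\mathcal F$ by support: for a fixed $k$-set $A$ the restrictions form a set of vectors in $\{\pm1\}^A$ with pairwise scalar products at least $t+1$, equivalently with pairwise Hamming distances at most $D:=\lf (k-t-1)/2\rf$. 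Hence the number of members of $\mathcal F$ supported on $A$ is at most the maximal size of a diameter-$D$ anticode in $\{\pm1\}^k$, which is exactly the Kleitman-type quantity $S(k,\cdot)$ recorded in the statement. Summing over all $\binom nk$ supports already yields the clean upper bound $|\mathcal F|\le S(k,|t|-1)\binom nk$ in the negative-$t$ regime; the substance of the theorem is therefore to produce matching constructions and, in the even case, to prove that this local bound is \emph{not} globally attainable.

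For $0\le t\le k$ the local diameter $D$ is strictly below $k/2$, so sign patterns sharing a support must nearly agree and any two members of $\mathcal F$ are forced to have strictly positive scalar product. Here the binding constraint is global rather than local. I would first show, by a sign-alignment/compression argument, that an extremal family may be taken all-positive, and then invoke the large-$n$ Erd\H{o}s--Ko--Rado / Ahlswede--Khachatryan bound for families of $k$-sets with pairwise intersection at least $t+1$. This gives $\binom{n-t-1}{k-t-1}$, realised by the star of all positive vectors whose support contains a fixed $(t+1)$-set.

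For negative odd $t$ the construction is the core difficulty. The naive choice of a single Kleitman ball around an all-positive centre on every support fails: two vectors far from the centre on overlapping supports can disagree in up to $D$ shared coordinates and thereby violate $\langle v,w\rangle\ge t+1$. The plan is instead to choose the per-support anticodes coherently, exploiting the extra degree of freedom present in the odd-diameter Kleitman extremum, so that cross-support clashes occur only on an $O(n^{k-1})$ fraction of supports; for $n>n_0(k,t)$ one can afford to discard that fraction while still attaining $S(k,|t|-1)$ on almost every support, matching the local upper bound. To make ``almost every support'' precise and to exclude exotic large families I would use the delta-system (sunflower) method: pass to a large subfamily with a common kernel, note that the behaviour outside the kernel is governed only by disjoint tails, which contribute scalar product $0\ge t+1$ and hence never create edges, and reduce the optimisation to the bounded kernel.

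The even case is where the parity of $D$ turns the extremal local anticode from a configuration that can be shifted consistently across supports into an essentially rigid one, and this rigidity is exactly what forbids attaining $S(k,|t|-1)$ globally. The main obstacle is to quantify the loss precisely: one must show that imposing compatibility across overlapping supports collapses the per-support count to $S(k,|t|-2)$ on the bulk of supports, while the residual freedom in the extremal anticode organises into a two-sided Johnson structure and contributes precisely the lower-order term $\alpha[J(n,k-\tfrac{|t|}2,\tfrac{|t|}2,t)]$ of order $O(n^{k-1})$. Matching the upper bound then requires a stability statement for the local optimum: any family exceeding $S(k,|t|-2)$ on a non-negligible set of supports must, by the near-uniqueness of the optimal anticode, contain an overlapping pair of scalar product below $t+1$. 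I expect this interplay---the exact bookkeeping of the residual Johnson term together with the stability of the extremal anticode under the cross-support constraints---to be the hardest part of the argument.
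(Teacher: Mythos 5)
Your key reduction for the negative-$t$ cases contains a genuine error, and it inverts where the difficulty of the theorem actually lies. For two vertices on the \emph{same} support, independence in $K_\pm(n,k,t)$ forces Hamming distance at most $D=\lfloor (k-t-1)/2\rfloor=\lfloor (k+|t|-1)/2\rfloor$ between the sign patterns --- you compute this $D$ correctly --- but for $t<0$ this $D$ is much larger than $|t|-1$, so the per-support Kleitman bound is $S(k,D)$, not $S(k,|t|-1)$, and summing it over supports gives a bound strictly weaker than the one claimed in the theorem. Concretely, for $k=3$, $t=-1$ your local constraint permits two vectors per support, e.g.\ $(+,+,+)$ and $(-,+,+)$, whose scalar product is $1\geq t+1$; summing gives $2\binom{n}{3}$, while the theorem asserts $S(3,0)\binom{n}{3}=\binom{n}{3}$. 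The constraints that actually bite are the cross-support ones: $(-,+,+)$ on $\{1,2,3\}$ and $(+,-,+)$ on $\{1,2,4\}$ have scalar product $-2\leq t$ and hence form an edge. Thus the hard direction is precisely the upper bound, which must control pairs on distinct overlapping supports; this is what Frankl and Kupavskii's shifting/compression technique is for (the present paper only cites their result and explicitly remarks that shifting is their main tool), and nothing in your sketch replaces it. In particular your sunflower reduction does not help, because the dangerous pairs have overlapping tails, not disjoint ones.

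Conversely, the part you present as the ``core difficulty,'' the construction, is in fact the easy half, and your claim that the naive construction fails is wrong. Taking on every support all vectors with at most $(|t|-1)/2$ negative entries (for odd $|t|$) works outright: a coordinate of disagreement is a negative entry of exactly one of the two vectors, so any two members disagree in at most $|t|-1$ places, whence their scalar product is $p-q\geq -q\geq t+1$ no matter how the supports overlap. This is exactly Example~\ref{someex} of the paper, and it attains $S(k,|t|-1)$ on every support with no discarding and no ``coherent choices.'' So as it stands the proposal establishes neither direction: the upper bound rests on a false local-to-global step (conflating the same-support diameter $D$ with $|t|-1$), and the lower-bound machinery is aimed at a non-problem.
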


\begin{theorem}[Frankl--Kupavskii,~\cite{frankl2016intersection}]
For any $t < 0$ and $n > n_0(k, t)$ we have 
\[
\alpha[J_\pm(n,k,t)] \leq S(k, |t|-1) \binom{n}{k} + O \left (n^{k-1}\right ).
\]
\label{approx} 
\end{theorem}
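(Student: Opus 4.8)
The plan is to take a maximum independent set $I$ in $J_\pm(n,k,t)$ and sort its vertices by support. For a $k$-set $S$ write $I_S \subseteq \{-1,1\}^S$ for the sign patterns of the vertices of $I$ with support exactly $S$, so that $|I| = \sum_{S} |I_S|$. Comparing the trivial bound $|I_S| \le 2^k$ with the target coefficient $S(k,|t|-1)$ indicates the right intermediate statement: all but $O(n^{k-1})$ of the supports should satisfy $|I_S| \le S(k,|t|-1)$. Granting this and calling a support \emph{excess} when $|I_S| > S(k,|t|-1)$, one gets
\[
|I| \le S(k,|t|-1)\binom{n}{k} + 2^k \cdot \#\{\text{excess supports}\},
\]
so the whole theorem reduces to proving that the number of excess supports is $O(n^{k-1})$.

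First I would isolate the purely local content. Two vertices of $I$ with the same support $S$ are non-adjacent, so their scalar product is \emph{not} $t$; since scalar products of sign patterns on $S$ share the parity of $k$, such a product differs from $t$ by at least $2$. By the local (Kleitman-type) optimum recorded in Theorem~\ref{Kl}, the maximum number of patterns in $\{-1,1\}^S$ with all pairwise scalar products strictly greater than $t$ equals $S(k,|t|-1)$. Hence if $S$ is excess then $I_S$ cannot have all pairwise products $>t$, so it contains a pair $u,v$ with $\langle u,v\rangle \le t$, and by the parity remark this pair is in fact \emph{deep}, i.e. $\langle u,v\rangle \le t-2$. Thus every excess support comes equipped with a witnessing deep pair. (Note the within-support constraint alone does not forbid excess supports, since avoiding the single value $t$ is far weaker than the anticode condition; the cross-support edges are essential.)

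The heart of the argument, and the step I expect to be the main obstacle, is to bound the number of excess supports by $O(n^{k-1})$ using cross-support edges. Vertices on supports $S,S'$ are adjacent exactly when $\sum_{\ell\in S\cap S'}(\cdot)=t$, which forces $|S\cap S'|\ge|t|$ and fixes the parity of the overlap. The mechanism behind the witnessing deep pairs is a coordinate swap: from a deep pair $u,v$ on $S$ with $\langle u,v\rangle=t'\le t-2$ one removes a coordinate on which $u,v$ disagree and inserts a fresh coordinate outside $S$, which raises the scalar product by exactly $1$; since $u,v$ have $(k-t')/2\ge t-t'$ disagreements, after $t-t'$ swaps one reaches scalar product exactly $t$. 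Now suppose, for contradiction, that there were more than $Cn^{k-1}$ excess supports for a large constant $C=C(k,t)$. Choosing the core size $r\in\{k-1,k-2\}$ with $r\equiv t\pmod 2$ (possible since $|t|<k$), a double counting of (excess support, $r$-subset) incidences against the $\Theta(n^{r})$ available $r$-cores forces a single core $T$ to lie in a large bundle of excess supports. One then tries to select two patterns $a,b\in I$ on two \emph{different} supports of the bundle whose restrictions to $T$ have scalar product exactly $t$, producing an edge of $J_\pm$ inside $I$. The deepness of the witnessing pairs is what should guarantee that the patterns appearing in $I$ across the bundle realize enough distinct values on $T$ to force the value $t$; and the hypothesis $n>n_0(k,t)$ supplies both the fresh coordinates for the swaps and a bundle large enough to make the collision unavoidable. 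Quantifying this collision is the genuinely hard part, and its error term is precisely the source of the $O(n^{k-1})$ in the statement.

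Combining the reduction of the first paragraph with $\#\{\text{excess supports}\}=O(n^{k-1})$ then yields
\[
\alpha[J_\pm(n,k,t)] = |I| \le S(k,|t|-1)\binom{n}{k} + O(n^{k-1}),
\]
as claimed. The two delicate points are the local Kleitman-type identification of $S(k,|t|-1)$ as the exact anticode bound, which I would import from Theorem~\ref{Kl}, and the cross-support collision argument sketched above.
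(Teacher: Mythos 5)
There is a genuine gap, and it lies in the step you treat as settled rather than in the step you flag as hard. You claim, citing Theorem~\ref{Kl}, that the maximum number of sign patterns on a fixed support with all pairwise scalar products strictly greater than $t$ equals $S(k,|t|-1)$, and you deduce that every excess support carries a witnessing ``deep pair'' with product at most $t$. This misapplies Kleitman's theorem: two patterns on a common $k$-support with $d$ sign disagreements have scalar product $k-2d$, so ``all pairwise products $>t$'' is the diameter condition $d\le\lceil (k+|t|)/2\rceil-1$, and the Kleitman maximum for that condition is $S\bigl(k,\lceil (k+|t|)/2\rceil-1\bigr)$, far larger than $S(k,|t|-1)$. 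Concretely, for $k=3$, $t=-2$, the four patterns $(1,1,1)$, $(-1,1,1)$, $(1,-1,1)$, $(1,1,-1)$ on one support have pairwise products $\ge -1>t$; this support is excess, since $4>S(3,1)=2$, yet it contains no pair with product $\le t$. So excess supports need not contain deep pairs, and your whole cross-support mechanism (coordinate swaps starting from a product-$(\le t-2)$ pair) never gets to act on such supports. The correct contrapositive of Theorem~\ref{Kl} yields only a much weaker witness: $|I_S|>S(k,|t|-1)$ forces a pair at Hamming distance at least $|t|$, i.e.\ with product at most $k-2|t|$, a value that in general lies well above $t$.

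For comparison: the paper itself does not prove this statement (it is quoted from Frankl--Kupavskii); what it does prove, in Section~\ref{small}, is exactly the instance $k=3$, $t=-2$, and that argument shows what the correct witness buys. Call $v\in I$ \emph{bad} if some vertex of $I$ with the same support disagrees with it on at least $|t|$ places. Good vertices on a common support pairwise disagree on at most $|t|-1$ places, so Theorem~\ref{Kl} legitimately gives at most $S(k,|t|-1)$ of them per support. For the bad ones, the forcing argument is: if $u,v\in I$ share the support $S$ and disagree on a set $D'$ with $|D'|=|t|$, then any vertex whose support meets $S$ exactly in $D'$ and whose signs there copy $u$ (resp.\ $v$) has product exactly $t$ with $v$ (resp.\ $u$); hence every member of $I$ containing either of the two antipodal signplace $|t|$-sets on $D'$ must have a further place inside $S\setminus D'$. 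This pins one of its remaining places to at most $k-|t|$ values, so each such ``tainted'' signplace $|t|$-set is contained in $O(n^{k-|t|-1})$ members of $I$; since every bad vertex contains a tainted set and there are $O(n^{|t|})$ signplace $|t|$-sets in all, the bad vertices number $O(n^{k-1})$. Your outer frame (support decomposition plus an $O(n^{k-1})$ error) matches this, but your local dichotomy is wrong, and the collision step you explicitly defer as ``the genuinely hard part'' is precisely this forcing argument; as written, the proposal does not yield the theorem.
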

The main technique in the Frankl--Kupavskii theorems is shifting, which is not applicable directly to Johnson-type graphs.

\subsection{Results}
\label{res}

Let $J(n, k, even)$ be a graph with the vertex set $\{0, 1\}^n$, where edges connect vertices with even scalar product (note that each vertex has a loop if $k$ is even). Define $J(n, k, odd)$ in a similar way. 
Let $J_{\pm}(n, k, even)$ and $J_{\pm}(n, k, odd)$ be defined analogously to $J(n, k, even)$ and $J(n, k, odd)$.

\begin{proposition}
If $n > n_0(k)$ then
\[
\alpha[J_{\pm}(n, k, even)] = 2^k \alpha[J(n, k, even)],
\]
\[
\alpha[J_{\pm}(n, k, odd)] = 2^k \alpha[J(n, k, odd)].
\]
\label{prop_parity}
\end{proposition} 
For $n > n_0(k)$ the exact values of $\alpha[J_{\pm}(n, k, even)]$ and $\alpha[J_{\pm}(n, k, odd)]$ are determined in Theorem~\ref{oddevengraphs}.

\begin{proposition}
\label{prop_steiner}
For every $n \geq k$ we have
\[
\alpha[J_\pm(n, k, k-1)] = 2^{k} \alpha[J(n, k, k-1)].
\]
\end{proposition}

Note that $\alpha[J(n, k, k-1)]$ is the size of a largest partial Steiner $(n, k, k-1)$-system. In particular, if the divisibility conditions hold, then $\alpha[J(n, k, k-1)] = \binom{n}{k-1} / k$ (see Subsection~\ref{steiner}).

We use the Katona averaging method and Reed--Solomon codes to prove the following theorem. 

\begin{theorem}
Suppose that $n > k2^{k+1}$.
Then 
\[
\alpha[J_\pm(n, k, -1)] = \binom{n}{k}.
\]
\label{-1}
\end{theorem}

Theorem~\ref{-1} can be generalized as follows.
\begin{theorem}
Suppose that $t$ is a negative odd number, $n > n_0(k)$.
Then 
\[
\alpha[J_\pm(n, k, t)] = S \left (k, |t|-1 \right) \binom{n}{k},
\]
where $S$ is defined in Definition~\ref{def S}.
\label{bsimple}
\end{theorem}

The next theorem is a consequence of Theorems~\ref{EL} and~\ref{-1}.

\begin{theorem}
Let $n > \frac{9}{2} k^3 2^{k}$. Then 
\[
\alpha[J_\pm(n, k, 0)] = 2\binom{n-1}{k-1}.
\]
\label{0}
\end{theorem}

One can extract a kind of stability version of the previous theorem from its proof. 
\begin{corollary}
\label{cor0}
Suppose that $I$ is an independent set in $J_{\pm}(n,k,0)$ and no place intersects all the vertices of $I$. Then
\[
|I| \leq C(k) \binom{n}{k-2}.
\]
\end{corollary}

\paragraph{Structure of the paper.}
In Section~\ref{tools} we describe several classical definitions and theorems, that are used in examples and proofs:
Katona averaging methods, nontrivial intersecting families, isodiametric inequality for the Hamming cube,
simple hypergraphs and Reed--Solomon codes, Steiner systems and finally families with intersections of prescribed parity.

Section~\ref{examples} contains examples, Section~\ref{proofs} provides proofs, Section~\ref{small} specifies the results in the case $k \leq 3$. We finish with open questions in Section~\ref{open}.

\section{Tools}
\label{tools}

\subsection{Katona averaging method}
\label{KatonaLemma}
Properties of a graph with a rich group of automorphisms sometimes can be established via consideration of a proper subgraph.
The following lemma is a special case of Lemma 1 from~\cite{katona1975extremal}.

\begin{lemma} [Katona,~\cite{katona1975extremal}] Let $G = (V,E)$ be a vertex-transitive graph. Let $H$ be a subgraph of 
$G$. Then
\[
\frac{\alpha(G)}{|V(G)|} \leq \frac{\alpha(H)}{|V(H)|}.
\]
\label{Katona}
\end{lemma}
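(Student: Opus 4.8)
The plan is to run a double-counting (averaging) argument over the automorphism group $\Gamma$ of $G$. Since $G$ is vertex-transitive, $\Gamma$ acts transitively on $V(G)$, and by the orbit--stabilizer theorem the stabilizer of every vertex has the same order $|\Gamma|/|G|$. First I would fix a maximum independent set $I \subseteq V(G)$, so that $|I| = \alpha(G)$, and for each $\gamma \in \Gamma$ consider the image $\gamma(H)$. This is a subgraph of $G$ isomorphic to $H$, so in particular $\alpha(\gamma(H)) = \alpha(H)$.

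The key observation is that $I \cap V(\gamma(H))$ is an independent set of the graph $\gamma(H)$: it is a set of vertices of $\gamma(H)$, and it spans no edge of $\gamma(H)$ because it spans no edge of the larger graph $G \supseteq \gamma(H)$. Hence $|I \cap V(\gamma(H))| \le \alpha(\gamma(H)) = \alpha(H)$ for every $\gamma \in \Gamma$, and summing over the group gives the upper estimate $\sum_{\gamma \in \Gamma} |I \cap V(\gamma(H))| \le |\Gamma|\,\alpha(H)$.

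The second half is to evaluate the same sum by exchanging the order of summation. For a fixed vertex $v \in I$ one has $v \in V(\gamma(H))$ exactly when $\gamma^{-1}(v) \in V(H)$; by transitivity the number of $\gamma$ carrying a prescribed vertex of $V(H)$ onto $v$ equals the stabilizer order $|\Gamma|/|G|$. Thus each $v \in I$ is counted $|H| \cdot |\Gamma|/|G|$ times, whence $\sum_{\gamma \in \Gamma} |I \cap V(\gamma(H))| = \alpha(G)\,|H|\,|\Gamma|/|G|$. Comparing the two expressions for this sum and cancelling the common factor $|\Gamma|$ yields $\alpha(G)\,|H|/|G| \le \alpha(H)$, which rearranges to the desired inequality.

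I do not expect a genuine obstacle here; the only point demanding care is the transitivity bookkeeping, namely confirming that every vertex-stabilizer has order $|\Gamma|/|G|$ so that each $v \in I$ is indeed counted the same number of times. A minor subtlety worth flagging is that $H$ need only be a subgraph (not necessarily induced): what the argument actually uses is that each $\gamma(H)$ embeds in $G$ as a subgraph, so that independence is inherited by restriction, together with $\gamma(H) \cong H$, which keeps the independence number unchanged.
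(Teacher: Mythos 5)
Your proof is correct: the group-averaging double count over $\mathrm{Aut}(G)$, with the orbit--stabilizer theorem guaranteeing each vertex of $I$ is counted exactly $|H|\,|\Gamma|/|G|$ times, is precisely the standard argument for this lemma (the paper itself gives no proof, citing it as a special case of Lemma~1 of Katona's paper, whose proof is this same averaging argument). Your closing remark that only subgraph containment (not inducedness) is needed is also the right observation, since independence is inherited under restriction to any subgraph.
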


For example Lemma~\ref{Katona} immediately implies that for every fixed $k,t$ the following decreasing sequences converge
\[
a_n := \frac{\alpha[J_{\pm}(n,k,t)]}{|V[J_{\pm}(n,k,t)]|} \quad \mbox{and} \quad b_n := \frac{\alpha[K_{\pm}(n,k,t)]}{|V[K_{\pm}(n,k,t)]|},
\]
as $J_\pm(n-1, k, t)$ and $K_\pm(n-1, k, t)$ are isomorphic to subgraphs of $J_\pm(n, k, t)$ and $K_\pm(n, k, t)$, respectively, and both $J_\pm(n, k, t)$ and $K_\pm(n, k, t)$ graphs are clearly vertex-transitive.

Also since $J(n,k,t)$ is a subgraph of $J_{\pm}(n,k,t)$, Lemma~\ref{Katona} implies
\[
\frac{\alpha[J_{\pm}(n,k,t)]}{|V[J_{\pm}(n,k,t)]|} \leq \frac{\alpha[J(n,k,t)]}{|V[J(n,k,t)]|},
\]
which gives by $|V [J_\pm(n,k,t) ]|  = 2^k \binom{n}{k} = 2^k |V[J(n,k,t)]|$ the following bound
\begin{equation}
\alpha[J_{\pm}(n,k,t)] \leq 2^k \alpha[J(n,k,t)].
\label{averagingbound}
\end{equation}
It turns out that bound~\eqref{averagingbound} is rarely close to the optimal. On the other hand sometimes it is tight, for instance in Propositions~\ref{prop_parity} and~\ref{prop_steiner}.

\subsection{Nontrivial intersecting families}
\label{ELTh}

A family of sets $\mathcal{A}$ is \textit{intersecting} if every $a, b \in A$ have nonempty intersection.
A \textit{transversal} is a set that intersects each member of $\mathcal{A}$.

\begin{theorem}[Erd{\H o}s--Lov{\'a}sz,~\cite{erdos1975problems}]
Let $\mathcal{A}$ be an intersecting family consisting of $k$-element sets.
Then at least one of the following statements is true:
\begin{itemize}
    \item [(i)] $\mathcal{A}$ has a transversal of size at most $k-1$;
    \item [(ii)] $|\mathcal{A}| \leq k^k$.
\end{itemize}
\label{EL}
\end{theorem}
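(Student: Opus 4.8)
The plan is to reduce the dichotomy to a single implication and then bound the family by a product count. First note that for an intersecting family every member is itself a transversal, so $\tau(\mathcal{A})\le k$ always, where $\tau$ denotes the minimum size of a transversal. Consequently alternative (i), the existence of a transversal of size at most $k-1$, fails precisely when $\tau(\mathcal{A})=k$. Hence it suffices to prove: if $\mathcal{A}$ is $k$-uniform, intersecting and $\tau(\mathcal{A})=k$, then $|\mathcal{A}|\le k^k$. I would devote the rest of the argument to this implication.

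Second, I would extract from a minimum transversal a convenient collection of \emph{probe} members. Fix a transversal $T=\{t_1,\dots,t_k\}$ of minimum size $k$. For each $i$ the set $T\setminus\{t_i\}$ has size $k-1<\tau$, so it is not a transversal; thus some $A_i\in\mathcal{A}$ is disjoint from $T\setminus\{t_i\}$. Since $T$ itself is a transversal, $A_i$ still meets $T$, which forces $A_i\cap T=\{t_i\}$. The key consequence is that, because $\mathcal{A}$ is intersecting, every member $C\in\mathcal{A}$ meets each of $A_1,\dots,A_k$.

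Third, this suggests the counting bound. For $C\in\mathcal{A}$ choose one representative $x_i\in C\cap A_i$ for each $i$ and encode $C$ by the tuple $(x_1,\dots,x_k)\in A_1\times\cdots\times A_k$. Since $|A_i|=k$ there are at most $k^k$ tuples, so the desired bound follows as soon as the encoding is injective. Injectivity is immediate whenever the tuple consists of $k$ distinct elements: then $\{x_1,\dots,x_k\}$ is a $k$-element subset of the $k$-element set $C$, hence equals $C$, and $C$ is recovered from its tuple.

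I expect the distinct-representatives requirement to be the main obstacle, and it is essentially unavoidable: since $\mathcal{A}$ is intersecting, the probe sets $A_1,\dots,A_k$ cannot be made pairwise disjoint, so a single element of some member $C$ may lie in several $A_i$ at once. Such a $C$ has no system of distinct representatives for $(A_1,\dots,A_k)$ (by Hall's condition some subcollection is covered too efficiently), its tuple then repeats, injectivity breaks down, and $C$ may use its freed coordinates to leave the bounded ground set $W:=A_1\cup\dots\cup A_k$, whose size is only at most $k^2$. Controlling these \emph{collapsing} members is where the real work lies: I would handle them by a recursion that re-applies the extraction step to the subfamily they determine on a ground set shrunk by the already-used coordinates, arranging the branching so that the total count telescopes back to $k^k$. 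Setting up this recursion cleanly --- in particular preserving the intersecting behaviour and the maximal transversal number after restriction --- is the delicate point, since the naive link families obtained by deleting a single element need not even be intersecting.
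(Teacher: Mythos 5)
The paper cites this theorem from Erd\H{o}s--Lov\'asz without giving a proof, so your proposal can only be measured against the standard argument. Your first two steps are correct: since every member of an intersecting $k$-uniform family is itself a transversal, alternative (i) fails exactly when $\tau(\mathcal{A})=k$; and for a minimum transversal $T=\{t_1,\dots,t_k\}$ one indeed obtains members $A_i$ with $A_i\cap T=\{t_i\}$, which every $C\in\mathcal{A}$ must meet. But the counting step has a genuine gap, which you correctly locate and do not close: the encoding $C\mapsto(x_1,\dots,x_k)$ is injective only for members admitting a system of distinct representatives for $(C\cap A_1,\dots,C\cap A_k)$, and the members without one --- precisely those whose tuple repeats an element --- are left completely unbounded. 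The ``recursion'' you propose for them is never set up, and you yourself note the obstruction (the restricted families need not remain intersecting nor keep $\tau$ maximal), so as written the argument proves nothing about those members and the bound does not follow.

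The missing idea is adaptivity, and with it the proof is shorter than your attempted repair. Do not fix the probes $A_1,\dots,A_k$ in advance from a single transversal; choose them on the fly. Build a tree whose root is labelled $\emptyset$, and in which a node labelled by a set $S$ with $|S|<k$ is assigned, using $\tau(\mathcal{A})=k$, a member $A_S\in\mathcal{A}$ with $A_S\cap S=\emptyset$, and is given the $k$ children $S\cup\{x\}$ for $x\in A_S$. For any $B\in\mathcal{A}$, walk down the tree keeping the invariant $S\subseteq B$: at node $S$ the intersecting property yields some $x\in B\cap A_S$, and $x\notin S$ holds automatically because $A_S\cap S=\emptyset$; after $k$ steps the leaf $S$ satisfies $S\subseteq B$ and $|S|=k=|B|$, hence $S=B$. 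Thus every member of $\mathcal{A}$ is a leaf of a $k$-ary tree of depth $k$, giving $|\mathcal{A}|\le k^k$. Adaptive choice of the probe set is exactly what makes your distinct-representatives condition hold for free; with probes fixed in advance it genuinely fails, since the $A_i$ pairwise intersect, representatives can repeat, and injectivity breaks down.
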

One can find better bounds in the case~(ii)~\cite{cherkashin2011maximal,arman2017upper,frankl2017near,zakharov2020size}.
In particular, for $k = 3$ it is known that $3^3 = 27$ in~(ii) can be replaced with 10 and this result is sharp~\cite{frankl1996covers}.

\begin{theorem}[Deza,~\cite{deza1974solution}]
Let $\mathcal{A}$ be a family of $k$-element sets such that $|A_i \cap A_j|$ is the same for all different $A_i,A_j \in \mathcal{A}$. Then at least one of the following statements is true:
\begin{itemize}
    \item [(i)] $A_i \cap A_j$ is the same for all different $A_i,A_j \in \mathcal{A}$;
    \item [(ii)] $|\mathcal{A}| \leq k^2 - k + 1$.
\end{itemize}
\label{DezaThm}
\end{theorem}

\subsection{An isodiametric inequality}
\label{kleitman}
Define \textit{the Hamming distance} between two subsets of $[n]$ as the size of their symmetric difference.
The \textit{Hamming distance} between two vectors $v_1,v_2\in\{-1,0,1\}^n$ is the number of coordinates that differ between $v_1$ and $v_2$.
The \textit{diameter} of a family $\mathcal{A} \subset 2^{[n]}$ or $\mathcal{A} \subset \{-1,0,1\}^n$ is the maximal distance between its members.

\begin{theorem}[Kleitman,~\cite{kleitman1966combinatorial}]
Let $\mathcal{A} \subset 2^{[n]}$ be a family with the diameter at most $D$ for $n > D$. Then
\[
|\mathcal{A}| \le
S(n, D),
\]
where $S$ is defined in Definition~\ref{def S}.
\label{Kl}
\end{theorem}

Theorem~\ref{Kl} is sharp: in the case of even $D$ the equality holds for the family $\mathcal K(n, D) := \{A\subset[n]\colon |A| \leq \frac{D}{2}\}$ and in the case of odd $D$ the equality holds for the family $\mathcal K_x(n, D) := \{A\subset[n]\colon |A \setminus \{x\}| \leq \frac{D}{2}\}$ for some fixed $x\in[n]$. 
Moreover, in \cite{frankl2017stability} Frankl proved the following stability result.
\begin{theorem}[Frankl,~\cite{frankl2017stability}]
Let $\mathcal{A} \subset 2^{[n]}$ be a family with the diameter at most $D$ and $|\mathcal A| = S(n, D)$ for $n \ge D + 2$. Then in the case of even $D$ family $\mathcal A$ is a translate of $\mathcal K(n, D)$ (that is, equal to $\{A \Delta T\colon A\in \mathcal K(n, D)\}$ for some $T \subset [n]$) and in the case of odd $D$ family $\mathcal A$ is a translate of $\mathcal K_y(n, D)$.
\label{Kl_stability}
\end{theorem}

\subsection{Simple hypergraphs and Reed--Solomon codes}
\label{codes}
A hypergraph $H=(V,E)$ is a collection of \textit{(hyper)edges} $E$ on a finite set of \textit{vertices} $V$.
A hypergraph is called \textit{$k$-uniform} if every edge has size $k$.
A hypergraph is \textit{simple} if every two edges share at most one vertex. 
The following construction is a special case of Reed--Solomon codes; it is also known as Kuzjurin's construction~\cite{kuzjurin1995difference}. 

Fix a prime $p > k$ and let the vertex set $V$ be the union of $k$ disjoint copies of 
a field with $p$ elements $\mathbb{F} = GF(p)$; call them $\mathbb{F}_1, \dots, \mathbb{F}_k$.
Consider the following system of linear equations
\[
\sum^k_{i=1} i^j x_i = 0, \quad j = 0, 1, \dots , k - 3
\]
over $\mathbb{F}_p$.
The solutions $\{x_1,\dots x_k\} \in \mathbb{F}_1 \sqcup \dots \sqcup \mathbb{F}_k$, where $x_i \in \mathbb{F}_i$, form edge set $E$.
Fixing two arbitrary variables there is a unique solution over $\mathbb{F}_p$, because the corresponding
square matrix is a Vandermonde matrix with nonzero determinant. It means that there are $p^2$ different solutions and $|e_1 \cap e_2| \leq 1$ for every distinct $e_1,e_2 \in E$.
Summing up, $H_p(k) := (V,E)$ is a $p$-regular $k$-uniform simple hypergraph with $|V| = pk$ and $|E| = p^2$.

A $k$-uniform hypergraph is \textit{$b$-simple} if every two edges share at most $b$ vertices. 
The same construction with $k-b-1$ equations gives an example of a $k$-uniform $b$-simple hypergraph $H(p,k,b)$.

Further we use \textit{regularity} of $H(p,k,b)$ in the following sense. Consider an arbitrary $b$-vertex set $A$. If $A$ contains at most 1 vertex from every copy of $\mathbb{F}_p$ then $H$ has exactly $p$ hyperedges containing $A$; otherwise $H$ contains no such edges. Slightly abusing the notation we say that $b$-codegree of $H$ is $p$.

\subsection{Steiner systems}
\label{steiner}

A \textit{Steiner system} with parameters $n$, $k$ and $l$ is a collection of $k$-subsets of $[n]$ such that every $l$-subset of $[n]$ is contained
in exactly one set of the collection.
There are some obvious necessary ‘divisibility conditions’ for the existence of Steiner $(n,k,l)$-system: 
\[
\binom{k-i}{l-i} \mbox{ divides } \binom{n-i}{k-i} \mbox{ for every } 0 \leq i \leq k-1.
\]

In a breakthrough paper~\cite{keevash2014existence} Keevash proved the existence of Steiner $(n, k, l)$-systems for fixed $k$ and $l$ under the divisibility conditions and for $n > n_0(k,l)$ (different proofs can be found in~\cite{glock2016existence, keevash2018existence}).

\paragraph{Partial Steiner system.}  When the divisibility conditions do not hold we are still able to construct a large \emph{partial Steiner system}, that is, a collection of $k$-subsets of $[n]$ such that every $l$-subset of $[n]$ is contained in \emph{at most} one set of the collection.
R{\"o}dl confirmed a conjecture of Erd\H{o}s and Hanani and proved the following theorem.
\begin{theorem}[R{\"o}dl, \cite{rodl1985packing}]
For every fixed $k$ and $l < k$, and for every $n$ there exists a partial $(n,k,l)$-system with 
\[
(1 - o(1))\binom{n}{l}/\binom{k}{l}
\] 
$k$-subsets.
\end{theorem}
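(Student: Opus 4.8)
The plan is to prove the Rödl nibble bound; the matching upper bound is immediate, since in a partial $(n,k,l)$-system every $k$-set covers exactly $\binom{k}{l}$ of the $l$-subsets and no $l$-subset is covered twice, so the number of $k$-sets never exceeds $\binom{n}{l}/\binom{k}{l}$. Thus all the work is in the lower bound. First I would recast the problem as a packing problem in an auxiliary hypergraph $\mathcal{H}$ whose vertex set is the family of all $l$-subsets of $[n]$ and whose edges are the traces of the $k$-subsets: to each $k$-set $K$ associate the edge $e_K = \{T : T \subset K, |T| = l\}$ of size $\binom{k}{l}$. A partial $(n,k,l)$-system is then precisely a matching in $\mathcal{H}$, and its size equals the number of matching edges. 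Two structural facts make $\mathcal{H}$ amenable to the nibble: it is nearly regular, every vertex $T$ lying in exactly $D := \binom{n-l}{k-l} = \Theta(n^{k-l})$ edges; and it has small codegree, since two distinct $l$-subsets $S, T$ with $|S \cup T| = m > l$ lie together in $\binom{n-m}{k-m} = O(n^{k-l-1}) = o(D)$ edges. The goal becomes to find a matching covering all but an $o(1)$ fraction of the vertices.

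Next I would run the semi-random greedy \emph{nibble} in rounds $i = 1, \dots, R$. Fix a small $\epsilon > 0$. Entering round $i$ we carry a set $U_{i-1}$ of still-uncovered vertices together with the set of \emph{available} edges, namely those all of whose vertices are uncovered. Select each available edge independently with probability $p_i = \epsilon / D_{i-1}$, where $D_{i-1}$ is the current typical degree; then discard every selected edge that shares a vertex with another selected edge, add the surviving edges to the packing, and delete from $U_{i-1}$ all vertices they cover, together with every edge that thereby becomes unavailable. The point of the small selection probability is that a given selected edge conflicts with another selected one only with probability $O\bigl(\binom{k}{l} \cdot (\max\text{ codegree}) \cdot p_i\bigr) = o(\epsilon)$, so almost all selected edges survive the cleanup.

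The core of the argument is tracking two quantities across rounds and showing they follow deterministic trajectories. Let $u_i = |U_i|/\binom{n}{l}$ be the uncovered fraction, and for an uncovered vertex $T$ let $d_i(T)$ be its current available degree. I would show by induction that, with high probability, $d_i(T)$ stays concentrated about a common value $D_i$, so that $\mathcal{H}$ remains nearly regular, that the codegrees remain $o(D_i)$, and that $u_i$ shrinks by roughly the factor $(1-\epsilon)$ per round, so that $u_i \approx (1-\epsilon)^i$. Each one-step estimate is an expectation computation followed by a concentration bound of Azuma/bounded-differences type (or a direct second-moment argument), exploiting that a single edge affects only a bounded number of vertices. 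The main obstacle, and the technical heart of the proof, is exactly this inductive maintenance: one must keep the approximate regularity and the small-codegree hypotheses alive simultaneously through all rounds, since the deletions in one round perturb the degree distribution that the next round relies on.

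Finally I would choose the number of rounds $R = R(\epsilon)$ large enough that $u_R < \epsilon$, so the matching covers at least $(1-\epsilon)\binom{n}{l}$ vertices and hence consists of at least $(1-\epsilon)\binom{n}{l}/\binom{k}{l}$ edges, i.e. $k$-sets. Letting $\epsilon \to 0$ slowly as $n \to \infty$ absorbs the error into the $(1-o(1))$ factor and yields the claimed bound. Alternatively, the whole construction can be packaged as an application of the Pippenger--Spencer matching theorem for uniform hypergraphs of bounded edge-size with small codegree, to which the two structural facts above reduce the problem directly.
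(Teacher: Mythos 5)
This statement is imported by the paper as a background tool and is stated with a citation to \cite{rodl1985packing} but no proof, so there is no internal argument to compare against. Your sketch reconstructs the standard route --- the R{\"o}dl nibble, equivalently a reduction to the Pippenger--Spencer matching theorem --- which is essentially the approach of the cited source: the translation to a matching problem in a nearly regular, small-codegree $\binom{k}{l}$-uniform auxiliary hypergraph is set up correctly (degrees $\binom{n-l}{k-l}$, codegrees $O(n^{k-l-1})$), and the round-by-round scheme tracking the uncovered fraction and the degrees is the right skeleton, with the genuinely hard part (the inductive maintenance of regularity via concentration) correctly identified even if not carried out. One quantitative slip is worth fixing: the probability that a selected edge conflicts with another selected edge is governed by the \emph{degree}, not the codegree --- a given edge meets at most $\binom{k}{l} D_{i-1}$ other edges, so the conflict probability is $O\bigl(\binom{k}{l} D_{i-1} p_i\bigr) = O(\epsilon)$, not $o(\epsilon)$ as you claim. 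This does not damage the argument, since an $O(\epsilon)$ loss per round is absorbed by letting $\epsilon \to 0$; the codegree hypothesis is what keeps degrees concentrated between rounds (and is the hypothesis of Pippenger--Spencer), rather than what controls the cleanup loss.
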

Later the result was refined in \cite{kostochka1998partial,grable1999more,kim2001nearly}. Also it follows from the mentioned results on Steiner systems.

\subsection{Families with even or odd intersections}

Recall that $J(n, k, even)$ and $J(n, k, odd)$ were defined in Subsection~\ref{res}. Frankl and Tokushige determined the independence numbers of these graphs.
 
\begin{theorem}[Frankl--Tokushige,~\cite{frankl2016uniform}]
\label{oddevengraphs}
Let $n \geq n_0(k)$. Then
\begin{align*}
    \alpha[J(n, k, odd)] =&~ \binom{\lfloor n / 2 \rfloor}{k / 2}
    &\mbox{ for even }k,
    \\
    \alpha[J(n, k, even)] =&~ \binom{\lfloor (n - 1) / 2 \rfloor}{(k - 1) / 2}
    &\mbox{ for odd }k.
\end{align*}

\end{theorem}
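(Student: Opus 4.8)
I will treat the two claims in parallel, proving the matching lower bounds by explicit constructions and the upper bounds by combining a linear-algebra observation (which explains the appearance of $\lfloor n/2\rfloor$) with the Delta-system method (which forces the extremal family to be matching-like once $n$ is large). For the constructions: in the even case ($J(n,k,odd)$, $k$ even) an independent set is a family of $k$-sets with all pairwise intersections even, so I fix a near-perfect matching of $[n]$ into $\lfloor n/2\rfloor$ pairs and take all unions of $k/2$ pairs; any two such sets meet in a union of common pairs, hence evenly, giving $\binom{\lfloor n/2\rfloor}{k/2}$ sets. In the odd case ($J(n,k,even)$, $k$ odd) an independent set has all pairwise intersections odd; I fix a point $x$, match the remaining $n-1$ points into $\lfloor (n-1)/2\rfloor$ pairs, and take $\{x\}$ together with $(k-1)/2$ pairs; any two meet in $1$ plus a union of pairs, hence oddly, giving $\binom{\lfloor(n-1)/2\rfloor}{(k-1)/2}$ sets.

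For the upper bound I would first set up the linear algebra. Writing $v_A\in\mathbb{F}_2^n$ for characteristic vectors and $B(x,y)=\sum_i x_iy_i$, in the even case all inner products $\langle v_A,v_B\rangle$ (including $A=B$, since $k$ is even) vanish, so the $v_A$ span a totally isotropic subspace of $(\mathbb{F}_2^n,B)$, whose dimension is at most $\lfloor n/2\rfloor$; the odd case reduces to this after subtracting a fixed $v_{A_0}$, producing the passage to $n-1$ coordinates visible in the answer. This explains why $\lfloor n/2\rfloor$ is the correct quantity, but by itself only yields $|\mathcal F|\le 2^{\lfloor n/2\rfloor}$ — and it cannot do better in general, since for small $n$ self-dual codes (the extended Hamming code already gives $14>\binom{4}{2}$ even-intersecting $4$-sets in $[8]$) beat the claimed bound. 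Hence the hypothesis $n\ge n_0(k)$ is essential and the real content is a large-$n$ structural statement.

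For that I would run the Delta-system (sunflower) method. If $|\mathcal F|$ exceeds the sunflower threshold $k!\,s^k$ for a suitable $s=s(k)$, I extract a sunflower; its core $C$ satisfies $|C|=|A_i\cap A_j|$, which is even (even case) or odd (odd case), and any $B\in\mathcal F$ disjoint from almost all petals must meet $C$ with the prescribed parity. Iterating this extraction organizes $[n]$ into boundedly many "blocks," each met by every member of $\mathcal F$ with a fixed parity; the parity constraints, together with the requirement that every set have size exactly $k$, should force each block for large $n$ to be a pair met evenly (even case), or a single common point plus pairs (odd case). Counting the families supported on such a block structure and optimizing over it then returns exactly $\binom{\lfloor n/2\rfloor}{k/2}$ and $\binom{\lfloor(n-1)/2\rfloor}{(k-1)/2}$.

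The hard part will be the kernel/block analysis: showing rigorously that the parity constraints force a genuine pair partition rather than larger even-met blocks or algebraically entangled cores of the self-dual-code type, controlling the contribution of the few sets meeting several petals, and extracting an explicit $n_0(k)$ that provably rules out the small-$n$ coding configurations. I expect the optimization step — that among all admissible block structures the all-pairs one is the unique maximizer once $n$ is large — to require the most care.
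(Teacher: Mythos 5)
First, a point of comparison: the paper does not prove this statement at all. It is quoted as a known theorem of Frankl--Tokushige~\cite{frankl2016uniform}, and the only accompanying material is a description of the extremal families (unions of $k/2$ pairs from a fixed matching of $[n]$, with a fixed point $x$ added in the odd case). Your lower-bound constructions reproduce exactly those families and are correct. Your two supporting observations are also correct and well taken: the $\mathbb{F}_2$-isotropy argument can give only $|\mathcal F|\le 2^{\lfloor n/2\rfloor}$, and the hypothesis $n\ge n_0(k)$ is genuinely necessary, as the $[8,4]$ extended Hamming code exhibits $14>\binom{4}{2}$ even-intersecting $4$-sets in $[8]$.

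The gap is the entire upper bound, which is the actual content of the theorem. What you present is a plan, and you say so yourself: the ``kernel/block analysis'' and the final optimization are left as expectations (``should force'', ``I expect''), and these are precisely where the difficulty lives. Worse, the sketch as written cannot succeed in its stated form: sunflower extraction produces a bounded-size core met with prescribed parity by every member, and iterating as you describe yields \emph{boundedly many} blocks; but the extremal configuration is a partition of $[n]$ into $\Theta(n)$ pairs, so a bounded family of blocks can never account for $\binom{\lfloor n/2\rfloor}{k/2}\sim c\,n^{k/2}$ sets. Passing from ``a few small sets are met evenly by every member'' to ``every member is a union of pairs of one fixed matching of $[n]$'' requires a genuinely different (global) argument --- for instance, exchange or counting arguments showing that members not conforming to a pair structure force the family to have size $O(n^{k/2-1})$, together with a stability step ruling out ``entangled'' self-dual-code-like cores once $n$ is large. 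Nothing in the proposal carries this out or even indicates how the iteration would generate linearly many pairwise-disjoint pairs. So the proposal is well oriented, with correct constructions and a correct diagnosis of why the statement is hard, but it does not prove the upper bound.
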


In the case when $k$ is even, the equality is achieved for the following family: we split $[n]$ into pairs and take all sets consisting of $k/2$ pairs. In the case when $k$ is odd we also add a fixed point $x\in[n]$ to each constructed set.

\section{Examples}
\label{examples}

Let us start with a simple example which is rarely close to the independence number.
\begin{example}
Let $t < 0$, $k > |t|$. Then $\alpha[J_\pm(n, k, t)] \geq 2^{|t| - 1}\binom{n}{k}$.
\label{ex1}
\end{example}

\begin{proof}
Fix an ordering of the coordinates. Take all vertices of $J_\pm(n,k,t)$ with the first $k-|t|+1$ nonzero coordinates equal to 1. Then two vertices can have different signs on at most $|t| - 1$ positions, therefore their scalar product is at least $-|t| + 1 = t + 1$.
\end{proof}

The following example is a part of Theorem~\ref{FK_Kneser}.
\begin{example}
\label{exmain}
For any  $t < 0$ and $k > |t|$ we have
\[
\alpha[J_\pm(n, k, t)] \geq S(k, |t|-1) \binom{n}{k},
\]
and for even $t$ we also have
\[
\alpha[J_\pm(n, k, t)] \geq S(k, |t|-1) \binom{n}{k} + \binom{k-1}{|t|/2}.
\]

\label{someex}
\end{example}

\begin{proof}
We start with the first bound for the case of odd $t$.
Let $I_{odd}$ be the set of all vertices of $J_\pm(n,k,t)$ with at most $(|t| - 1) / 2$ negative entries. Each $k$-set is the support of exactly
\[
\sum_{j=0}^{(|t| - 1) / 2} \binom{k}{j} = S(k,|t|-1)
\]
vertices in $I_{odd}$. Any two vectors in $I_{odd}$ may differ in at most
$2 (|t| - 1) / 2 = |t| - 1$ coordinates, so their scalar product is at least $t + 1$, and $I_{odd}$ is an independent set of the desired size.

Now we deal with the case of even $t$.
Fix an ordering of the coordinates. For every $k$-set $f$ add to $I_{even}$ all the vertices with at most $|t|/2 - 1$ negative entries on $f$ and all the vertices with $-1$ on the last coordinate of $f$ and exactly $|t|/2 - 1$ other coordinates. 
Then each $k$-set is the support of exactly
\[
\sum_{j=0}^{|t|/2 - 1} \binom{k}{j} + \binom{k-1}{|t|/2-1} = S(k,|t|-1)
\]
vertices in $I_{even}$.
Assume that $I_{even}$ is not independent, i.e. the scalar product of some $v_1, v_2 \in I_{even}$ is equal to $t$. 
Then $v_1$ and $v_2$ together have at least $|t|$ negative entries.
Hence both $v_1$ and $v_2$ have exactly $|t|/2$ negative entries, so both $\supp v_1$ and $\supp v_2$ have $-1$ at the last coordinates $x_1$ and $x_2$ respectively.
But then both $v_1$ and $v_2$ can not have $+1$ at coordinates $x_2$ and $x_1$ respectively, so the scalar product is at least $t+1$.
This contradiction shows that $I_{even}$ is an independent set of the desired size.

Now we proceed to the second bound.
Let us add to $I_{even}$ all the vertices on the lexicographically first support $\{1, \ldots, k\}$ with exactly $|t|/2$ negative entries and having $+1$ at the $k$-th coordinate.
Obviously the resulting set $I$ has the claimed size.
Since $k > |t|$ no edge connects two vertices from $I$ on the support $\{1, \ldots, k\}$.

Consider a vertex $v$ from $I_{even}$ and an added vertex $u$. 
Note that $u$ and $v$ together have at most $|t|$ negative entries. 
Since the largest coordinate of $\supp v$ has the number greater than $k$ and $v$ has $-1$ in this coordinate, the scalar product of $u$ and $v$ is at least $t+1$. Thus $I$ is independent.

\end{proof}

\begin{example}
For $t \geq 0$ we have
\[
\alpha[J_{\pm}(n, k, t)] \geq 2 \alpha[J(n, k, t)].
\]
\label{example_positive_t_1}
\end{example}

\begin{proof}
Let $I \subset V[J(n, k, t)]$ be an independent set of size $\alpha[J(n, k, t)]$. 
Define $I_\pm$ as a subset of $V[J_\pm(n, k, t)]$ consisting of vertices with all positive or all negative entries on every 
support $f = \supp v$, $v \in I$.
It is easy to see that the subset $I_\pm$ is independent in $J_\pm(n, k, t)$.
\end{proof}

\section{Proofs}
\label{proofs}

\paragraph{Proof of Proposition~\ref{prop_parity}.} Let $parity$ stand for $odd$ or $even$.

To prove the lower bounds consider an arbitrary maximal independent set $I$ in the graph $J(n,k,parity)$. 
Then all the vertices on the supports from $I$ form an independent set $I_\pm$ in $J_\pm(n,k,parity)$.
So
\[
\alpha[J_\pm(n,k,parity)] = 2^k \alpha[J(n,k,parity)].
\]

The upper bounds simply follows from Lemma~\ref{Katona}, since $J(n, k, parity)$ is a subgraph of $J_\pm(n, k, parity)$.

\paragraph{Proof of Proposition~\ref{prop_steiner}.} Since $J(n, k, k-1)$ is a subset of $J_\pm (n,k,k-1)$, by Lemma~\ref{Katona} we have
\[
\alpha[J_\pm(n, k, k-1)] \leq  2^{k} \alpha[J(n, k, k-1)].
\]

To prove the lower bound consider an arbitrary maximal independent set $I$ in the graph $J(n,k,k-1)$. 
Then all the vertices on the supports from $I$ form an independent set $I_\pm$ in $J_\pm(n,k,k-1)$.

\subsection{Proof of Theorem~\ref{-1}}

We start with the lower bound. One can take the vertices only with non-negative coordinates (so exactly one vertex on each support is taken); obviously the scalar product of such vertices is always non-negative, so
\[
\alpha(n,k,-1) \geq \binom{n}{k}.
\]

Now we will show the upper bound. Denote $G := J_{\pm}(n,k,t)$.
Fix a prime $p$, $n / (2k) \le p \le n / k$ (so by the statement of the theorem $p > 2^k$), 
and let $H := H_p(k)$ (see Subsection~\ref{codes}) be a $p$-regular $k$-uniform simple hypergraph with $V(H) \subset [n]$.
Define graph $G[H]$ as a subgraph of $G$, consisting of the vertices with support on edges of $H$.
So we have
\[
|V(G[H])| = 2^k|E(H)|.
\]

Fix an independent set $I$ in $G[H]$; consider the set $X \subset [n]$ of coordinates on which the vertices from $I$ have both signs.
Denote by $\supp I$ the set of all supports of vertices from $I$ ($\supp I \subset E(H)$) and
for a given $e \in \supp I$ put $e_X := e \cap X$. 

Note that $I$ has at most $2^{|e_X|}$ vertices on the support $e$ ($|e_X|$ might be zero).
Hence 
\begin{equation}
|I| \leq \sum_{e \in \supp I} 2^{|e_X|} = \sum_{e \in E(H): |e_X| = 0} 2^{|e_X|} + \sum_{e \in E(H): |e_X| > 0} 2^{|e_X|} =
\left|\left\{e \in E(H): |e_X| = 0\right\}\right| + \sum_{e \in E(H): |e_X| > 0} 2^{|e_X|}. 
\label{eq11}
\end{equation}

Let us show that $e_X$ form a disjoint cover of $X$. 
Suppose the contrary, i.e. there are $e,f \in \supp I$ such that $e_X \cap f_X \neq \emptyset$. Since hypergraph $H$ is simple, and $e, f$ correspond to its hyperedges, we have $|e_X \cap f_X| = |e \cap f| = 1$. Put $\{u\} := e \cap f$. By the definition of $X$ there are vertices $v_1$, $v_2 \in I$ having different signs on $u$. Since $I$ is independent, $v_1$ and $v_2$ have the same support (say, not $f$).
So every vertex of $G[H]$ with support $f$ forms an edge in $G[H]$ with one of $v_1$ or $v_2$, thus $I$ is not independent; contradiction.

So $\sum |e_X| = |X|$. Since the sequence $2^k/k$, $k \geq 1$, is non-decreasing and 
\[
\frac{a_1 + a_2 + \ldots + a_t}{b_1 + b_2 + \ldots + b_t} \leq \max\left( \frac{a_1}{b_1}, \frac{a_2}{b_2}, \dots, \frac{a_t}{b_t} \right),
\]
we have
\begin{equation}
\sum_{e \in E(H): |e_X| > 0} 2^{|e_X|} \leq \frac{|X|}{k} 2^k.
\label{eq12}    
\end{equation}

By definition, every $e \in \left\{e \in E(H): |e_X| = 0\right\}$ has empty intersection with $X$.
Since $H$ is $p$-regular, $X$ intersects at least $\frac{p|X|}{k}$ edges of $H$ (since every $k$-edge is counted at most $k$ times), so
\begin{equation}
    \left|\left\{e \in E(H): |e_X| = 0\right\}\right| \leq |E(H)| - \frac{p|X|}{k}.
    \label{eq13}
\end{equation}
Summing up, by~\eqref{eq11},~\eqref{eq12},~\eqref{eq13} and the choice of $p$, we have
\[
|I| \leq |E(H)| - \frac{|X|}{k}p  + \frac{|X|}{k} 2^k \leq |E(H)|,
\]
which implies $\alpha(G[H]) \leq |E(H)|$, hence
\[
\frac{V(G[H])}{\alpha (G[H])} \geq 2^k.
\]
By the definition $G[H]$ is a subgraph of the graph $G$, so Lemma~\ref{Katona} finishes the proof.

For some $k$ one can choose a smaller $H$ and require a weaker inequality for $n$, 
for instance in the case $k=3$ (see Subsection~\ref{betterH}).

\subsection{Proof of Theorem~\ref{bsimple}}
This is a generalization of the proof of Theorem~\ref{-1}. The lower bound is provided in the first part of Example~\ref{exmain}.

Denote $G = J_{\pm}(n,k,t)$ during the proof.
The case $t = -k$ is obvious, because $J_\pm(n,k,-k)$ is a matching. From now $|t| \leq k-1$.
Fix $n$ and a prime $p \le n/k$ to be large enough. 
Let $H = H(p, k, |t|)$ (see Subsection~\ref{codes}) be a $k$-uniform $|t|$-simple hypergraph with $|t|$-codegree $p$. 
Fix an embedding of $V(H)$ into $[n]$.

Define $G[H]$ as a subgraph of $G$, consisting of all the vertices with support on edges of $H$.
Fix an independent set $I$ in $G[H]$. 

Let an \textit{object} $O$ be a pair of opposite vectors $\{o,-o\}$ of length $\sqrt{|t|}$ with $\{0, \pm 1\}$ entries.
Let $\mathcal{X}$ be the set of objects $O = \{o,-o\}$
such that $(v_1,o) = (v_2,-o) = |t|$ for some vertices $v_1,v_2 \in I$ (this means that $v_1$ and $v_2$ coincide on $\supp o$ with $o$ and $-o$, respectively). Since $H$ is $|t|$-simple, one has $\supp v_1 = \supp v_2$.

Let $E_{tight}$ be the set consisting of such edges $e \in E(H)$ that $\diam I[e] < |t|$, where $I[e]$ stands for the set of vertices of $I$ with the support $e$.
Put $E_{wide} := E(H) \setminus E_{tight}$. Let $I_{tight}$ and $I_{wide}$ stand for the sets of vertices of $I$ with the support from $E_{tight}$ and $E_{wide}$ respectively. Then $|I| = |I_{tight}| + |I_{wide}|$.

Consider an arbitrary support $e \in E_{wide}$; by the definition of $E_{wide}$ there are an object $X=\{x,-x\} \in \mathcal{X}$ and vertices $v_1,v_2 \in I[e]$, such that $(v_1,x) = (v_2,-x) = |t|$. Since $I$ is independent and $H$ is $|t|$-simple, distinct $e_1$ and $e_2 \in E_{wide}$ cannot lead to the same $X \in \mathcal{X}$, so
\begin{equation}
|\mathcal{X}| \geq |E_{wide}|.
\label{xewide}    
\end{equation}

For every $e \in E_{tight}$ we have $\diam I[e] < |t|$, thus Theorem~\ref{Kl} implies
\begin{equation}
    |I[e]| \leq S \left (k, |t|-1 \right).
\label{123}
\end{equation}

\medskip

Fix a support $f\subset[n]$, $|f| = k$, and consider a family $A \subset \{-1,1\}^f$ with the diameter at most $|t|-1$ of the size $S(k,|t|-1)$. 
Also consider an object $O = \{o,-o\}$ such that $\supp O \subset f$ (recall that $|\supp O| = |t|$). By the pigeon-hole principle
one of $\{o,-o\}$ has at most $(|t|-1)/2$ negative entries. Thus there is a vector $v$ from $\mathcal{K}(k,|t|-1)$ such that
$(v,o) = |t|$ or $(v,-o) = |t|$. By Theorem~\ref{Kl_stability} $A$ is a translate of $\mathcal{K}(k,|t|-1)$, 
so the previous conclusion holds for $A$.

Fix an object $X\in \mathcal{X}$ and consider an arbitrary support $e \in E_{tight}$ containing $\supp X$. 
Assume that $(v,x) = \pm t$ for some $v \in I[e]$. Consider a support $g \in E_{wide}$ such that there are $u_1, u_2 \in I[g]$, satisfying $(u_1,x) = (u_2,-x) = t$ ($g$ exists because $X\in \mathcal{X}$). Since $H$ is $|t|$-simple, $(u_1,v) = t$ or $(u_2,v) = t$; a contradiction.
By Theorem~\ref{Kl_stability} we can refine the bound~\eqref{123} in this case:
\begin{equation}
\label{refined}
    |I[e]| \leq S \left (k, |t|-1 \right)-1.
\end{equation}

By the construction of $H$ for every $X \in \mathcal{X}$, $\supp X$ is contained in exactly $p$ edges of $H$ (because it is contained in at least one edge).
Every edge of $H$ is the support of $\binom{k}{|t|}2^{|t|-1}$ objects, so is counted above at most $\binom{k}{|t|}2^{|t|-1}$ times. By~\eqref{xewide} at most $|\mathcal{X}|$ of the edges are wide. So the refined bound~\eqref{refined} is applicable to at least
\[
\frac{p|\mathcal{X}|}{\binom{k}{|t|}2^{|t|-1}} - |\mathcal{X}|
\]
tight edges. Then
\[
|I_{tight}| \leq S \left (k, |t|-1 \right) |E_{tight}| - \frac{p|\mathcal{X}|}{\binom{k}{|t|}2^{|t|-1}} + |\mathcal{X}|.
\]
On the other hand there is a straightforward bound
\[
|I_{wide}| \leq 2^k |E_{wide}| \leq 2^k |\mathcal{X}|.
\]
Putting it all together
\begin{equation}
|I| = |I_{tight}| + |I_{wide}| \leq S \left (k, |t|-1 \right) |E_{tight}| - \frac{p|\mathcal{X}|}{\binom{k}{|t|}2^{|t|-1}} + (2^k+1)  |\mathcal{X}|.
\label{alphahahaha}
\end{equation}

For a large $n$ (then $p$ is also large enough) the inequality~\eqref{alphahahaha} implies $\alpha(G[H]) \leq S \left (k, |t|-1 \right) |E(H)|$. 
By the definition $G[H]$ is a subgraph $G$ and
\[
\frac{\alpha (G[H])}{V(G[H])} \leq \frac{S \left (k, |t|-1 \right)}{2^k},
\]
so Lemma~\ref{Katona} finishes the proof.

\subsection{Proof of Theorem~\ref{0}}



Consider an arbitrary independent set $I$ in the graph $J_{\pm}(n,k,0)$. 
Note that supports of the vertices of $I$ form an intersecting family; denote it by $F$. Let $U$ be a minimal (by inclusion) transversal of $F$.
As $U$ is minimal, for every coordinate $a \in U$ there is a vertex $x_a \in I$, such that $\supp x_a \cap U = \{a\}$.

In the case $|U| > 1$ we can consider the set 
\[
C := U \cup \supp x_a \cup \supp x_b
\]
for two different $a, b \in U$. Since $\supp x_a \cap \supp x_b \neq \emptyset$ we have $|C| \leq 3k$
and every $f \in F$ intersects $C$ in at least two places (suppose that $|f \cap U| = 1$, then it should intersect either $(\supp x_a)\setminus U$ or $(\supp x_b) \setminus U$). 
Hence
\[
|I| \leq 2^k \binom{|C|}{2} \binom{n}{k-2} < 2^k \frac{9k^2}{2} \binom{n}{k-2}.
\]
Recall that $n > \frac{9}{2} k^3 2^k$, so 
\[
2^k\frac{9k^2}{2} \binom{n}{k-2} < 2^k\frac{9k^2}{2} \frac{n^{k-2}}{(k-2)!} < \frac{n}{k-1} \frac{n^{k-2}}{(k-2)!} < 2 \binom{n-1}{k-1}.
\]

The remaining case is $|U| = 1$, say $U = \{u\}$.
Consider only vertices containing $u^+$, by Theorem~\ref{-1} we have at most $\binom{n-1}{k-1}$ such vertices.
The same bound for $u^-$ gives the desired bound.
The union of all vertices with a constant sign intersecting a fixed place $u$ achieves this bound.

\subsection{Proof of Corollary 1}

Let us repeat the proof of Theorem~\ref{0}. Let $I$ be an arbitrary independent set in $J_{\pm}(n,k,0)$.
Then 
\[
|I| < 2^k \frac{9k^2}{2} \binom{n}{k-2}
\]
or the family of all supports of vertices from $I$ has a transversal of size 1. The first two possibilities implies
\[
|I| \leq C(k) \binom{n}{k-2};
\]
the last one contradicts the condition of the corollary.

\section{The case \texorpdfstring{$k \leq 3$}{k <= 3}}
\label{small}

We have implemented {\"O}sterg{\aa}rd algorithm~\cite{ostergaard2002fast} to find independence numbers of several small graphs.
All the calculations were done on a standard laptop in a few hours. The source can be found in~\cite{KiselevIndependence}.

\subsection{The case \texorpdfstring{$k = 2$}{k = 2}}
\label{k=2}

\paragraph{The case $t=-1$.} By simple calculations we have
\[
\alpha[J_\pm(2,2,-1)] = \alpha[J_\pm(3,2,-1)] = 4, \quad
\alpha[J_\pm(4,2,-1)] = 8, \quad \alpha[J_\pm(5,2,-1)] = 10.
\]
In Section~\ref{KatonaLemma} we show that the sequence
\[
\frac{\alpha[J_{\pm}(n,2,-1)]}{|V[J_{\pm}(n,2,-1)]|}
\]
is non-increasing, so
\[
\alpha[J_\pm(n, 2, -1)] = \binom{n}{2}
\]
for $n \geq 5$.

\paragraph{The case $t=0$.}
It is straightforward to check that
\[
\alpha[J_\pm(2,2,0)] = 2, \quad
\alpha[J_\pm(3,2,0)] = \alpha[J_\pm(4,2,0)] = 6.
\]
For the case $n > 4$ we can repeat the proof of the Theorem~\ref{0} and show that $\alpha[J_\pm(n, 2, 0)] = 2(n-1)$.

\paragraph{The case $t=1$.} From Proposition~\ref{prop_steiner} we have 
\begin{align*}
\alpha[J_\pm(n,2,1)] &= 2n & \mbox{ for even $n$},
\\
\alpha[J_\pm(n,2,1)] &= 2(n-1) & \mbox{ for odd $n$}.
\end{align*}

\subsection{The case \texorpdfstring{$k=3$, $t=-1$}{k = 3, t = -1}}
\label{betterH}

\begin{proposition}
Let $n \geq 7$. Then
\[
\alpha[J_\pm(n,3,-1)] = \binom{n}{3}.
\]
\label{ngeq7}
\end{proposition}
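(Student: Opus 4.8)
The plan is to run the Katona-averaging argument from the proof of Theorem~\ref{-1}, but with a much smaller auxiliary hypergraph. The Reed--Solomon hypergraph used there needs $d-1>2^3=8$, i.e. a prime $p\ge 11$ and hence $3p$ coordinates, so it only delivers the bound for $n$ of order $k2^{k+1}=48$; to reach $n\ge 7$ I would instead take the Fano plane. The matching lower bound is the usual one: the all-positive vector on each of the $\binom n3$ supports gives an independent set, since any two such vectors have scalar product $\ge 0>-1$, so $\alpha[J_\pm(n,3,-1)]\ge\binom n3$.

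For the upper bound, fix $7$ of the $n$ coordinates and realize on them the Fano plane $H=S(2,3,7)$, a $3$-regular, $3$-uniform, \emph{simple} hypergraph with $|V(H)|=|E(H)|=7$ (here $n\ge 7$ is exactly what lets $H$ embed). With $G=J_\pm(n,3,-1)$ and $G[H]$ the induced subgraph on vectors supported on lines of $H$, we have $|V(G[H])|=2^3\cdot 7=56$ while $|V(G)|=8\binom n3$. As $G$ is vertex-transitive, Lemma~\ref{Katona} gives $\alpha(G)/|V(G)|\le\alpha(G[H])/56$, so it suffices to prove $\alpha(G[H])\le 7$; this immediately yields $\alpha(G)\le\binom n3$.

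Thus the whole statement reduces to the finite claim $\alpha(G[H])=7$, which I would verify directly (it also falls out of the {\"O}sterg{\aa}rd computation). Two adjacency facts drive the argument. On a single line, two vectors are adjacent iff they agree in one coordinate and disagree in two, so the conflict graph on the $8$ sign patterns is two copies of $K_4$ split by the parity of the number of $-1$'s; hence at most $2$ vectors per line. For two lines, which meet in a unique point $q$, the two vectors are adjacent iff they disagree at $q$, so independence forces them to agree at $q$. Writing $c_e\in\{0,1,2\}$ for the number of chosen vectors on a line $e$ and letting $D,Z$ be the lines with $c_e=2,0$, one gets $\sum_e c_e=7+|D|-|Z|$, and a doubled line forces every other line through a coordinate where its two vectors disagree to be empty.

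The crux, and the step I expect to be hardest, is showing $|Z|\ge|D|$ from these forced emptinesses. A doubled line with antipodal vectors empties all six other lines, so that case is trivial; otherwise each doubled line empties two lines, and the naive incidence count only gives $2|D|\le 3|Z|$. To close the gap one must rule out the borderline profiles ($|D|=3,|Z|=2$ and $|D|=4,|Z|=3$) using the rigidity of the Fano plane --- no doubled line passes through another's ``split'' point, and three Fano lines are either concurrent or form a triangle, which caps how many split points a single empty line can absorb. Once $|Z|\ge|D|$ is established we get $\sum_e c_e\le 7$, completing the reduction and hence the proposition.
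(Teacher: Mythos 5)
Your proposal is essentially identical to the paper's proof: embed the Fano plane on $7$ of the coordinates, observe $|V(G[H])|=56$, reduce everything via Lemma~\ref{Katona} to the finite claim $\alpha(G[H])=7$, and get the matching lower bound from the all-positive vectors (Example~\ref{ex1}). The only difference is that the paper dismisses the finite claim with ``one can even check it by hands,'' whereas you sketch that check; your sketch is sound and completable --- the profile $(|D|,|Z|)=(3,2)$ dies because both empty lines would have to contain all three (necessarily distinct) split points while two Fano lines meet in only one point, and $(4,3)$ dies because the four distinct split points would each have to be an intersection point of two of the three empty lines, of which there are at most three.
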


\begin{proof}
Fano plane is the projective plane over $GF(2)$ i.e. the following simple 3-graph on 7 vertices
\[
\{1,2,3\}, \{1,4,7\}, \{1,5,6\}, \{2,4,6\}, \{2,5,7\}, \{3,4,5\}, \{3,6,7\}.
\]
Consider an arbitrary embedding $F$ of the Fano plane into $V[J_\pm(n,3,-1)]$. As usual consider the subgraph $G[F]$; it has $7\cdot 2^3 = 56$ vertices.
One may check by hands or via computer that $\alpha(G[F]) = 7$. By Lemma~\ref{Katona}  
\[
\alpha[J_{\pm}(n,3,-1)] \leq \binom{n}{3}.
\]
On the other hand, Example~\ref{ex1} implies $\alpha[J_{\pm}(n,3,-1)] = \binom{n}{3}$.
\end{proof}

By the computer calculations we have
\[
\alpha[J_\pm(6,3,-1)] = 21 > \binom{6}{3} = 20,
\]
so Proposition~\ref{ngeq7} is sharp. Also
\[
\alpha[J_\pm(5,3,-1)] = 14, \quad \alpha[J_\pm(4,3,-1)] = 8, \quad \alpha[J_\pm(3,3,-1)] = 2.
\]

\subsection{The case \texorpdfstring{$k=3$, $t=0$}{k = 3, t = 0}}

By the computer calculations we have
\[
\alpha[J_\pm(3,3,0)] = \alpha[J_\pm(4,3,0)] = 8, \quad \alpha[J_\pm(5,3,0)] = 20, \quad \alpha[J_\pm(6,3,0)] = 32,
\]
\[
\alpha[J_\pm(7,3,0)] = \alpha[J_\pm(8,3,0)] = \alpha[J_\pm(9,3,0)] = 56.
\]

\begin{proposition}
Let $n \geq 9$. Then
\[
\alpha[J_\pm(n,3,0)] = 2\binom{n-1}{2}.
\]
\label{ngeq9}
\end{proposition}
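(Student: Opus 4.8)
The plan is to follow the intersecting-family analysis behind Theorem~\ref{0}, but to replace every estimate by a version that is special to $k=3$ and hence sharp enough to survive down to $n=9$.

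\textbf{Lower bound and the key reduction.} For the lower bound I fix a place $u$ and take the $2\binom{n-1}{2}$ vertices whose support contains $u$ and all of whose signs coincide (all $+$ or all $-$): two equally-signed vertices have scalar product $|\supp v\cap \supp w|\ge 1$, two oppositely-signed ones have scalar product $-|\supp v\cap \supp w|\le -1$, so this family is independent. For the upper bound the crucial observation is that for $k=3$ the scalar product of $v,w$ equals $\sum_{c\in\supp v\cap\supp w}v_cw_c$, which is automatically nonzero when the supports meet in one place (a single $\pm1$) or in three places (a sum of three $\pm1$). Hence a pair of vertices can only be an edge if their supports meet in exactly two places $\{c_1,c_2\}$, and then they are adjacent iff $v_{c_1}v_{c_2}\ne w_{c_1}w_{c_2}$.

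\textbf{Per-support count.} Let $I$ be independent and $F$ its family of supports (an intersecting family of $3$-sets). Call a pair \emph{heavy} if it lies in at least two members of $F$. Since two distinct supports containing a common pair $P$ meet in exactly $P$, the adjacency criterion forces all vertices of $I$ on all supports containing $P$ to share one value of the sign-product on $P$. Counting sign patterns on a fixed support subject to its heavy-pair constraints (linear conditions over $\mathbb F_2$ on three sign bits) shows that a support carrying $0,1,\ge 2$ heavy pairs contributes at most $8,4,2$ vertices of $I$, so $|I|\le\sum_{e\in F}w(e)$ with $w(e)\in\{8,4,2\}$.

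\textbf{The case split.} By Theorem~\ref{EL}, either $F$ has a transversal $U$ with $|U|\le 2$, or $|F|\le 27$. If $|U|=1$, say $U=\{u\}$, then every support contains $u$; splitting $I$ according to the sign of $u$ and deleting that coordinate injects each part into an independent set of $J_\pm(n-1,2,-1)$, whose independence number equals $\binom{n-1}{2}$ for $n-1\ge 5$, giving exactly $|I|\le 2\binom{n-1}{2}$ and matching the lower bound. The remaining configurations (a genuine $2$-transversal, or $|F|\le 27$) are precisely the non-star families. Here the Fano-type construction already reaches $8\cdot 7=56=2\binom{8}{2}$, so the desired inequality is only barely true at $n=9$ and the factor $2^3=8$ per support must be replaced by $w(e)$: in a dense non-star family most supports are forced to share a pair with another support (as in Theorem~\ref{0}, the set $C$ with $|C|\le 3k-4=5$ meets every support in at least two points, so $|F|\le\binom{5}{2}\binom{n}{1}=O(n)$), and each such shared pair halves that support's $8$ sign patterns.

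\textbf{Main obstacle.} All the difficulty sits in the non-star case and, specifically, in pinning the threshold at $n=9$ rather than the large $n_0$ of Theorem~\ref{0}. For large $n$ even the crude bound closes, since $\sum_e w(e)\le 8|F|=O(n)\le (n-1)(n-2)$; the real work is the finite window of small $n$, where the star value $2\binom{n-1}{2}$ and the Fano value $56$ compete and one must convert the heavy-pair savings into the exact inequality. Matching the method used elsewhere in the paper for $k\le 3$, I expect to anchor the finitely many residual values on the reported Österg\aa rd computations, which already certify equality at $n=9$, and to use the weighted count $\sum_e w(e)$ to handle all larger $n$.
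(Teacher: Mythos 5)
Your lower bound, the $k=3$ adjacency reduction (edges occur only between supports meeting in exactly two places, and then only when the sign-products on that pair differ), the heavy-pair weighting $w(e)\in\{8,4,2\}$, and the star case $|U|=1$ (split by the sign at $u$ and reduce to two copies of $J_{\pm}(n-1,2,-1)$, whose independence number $\binom{n-1}{2}$ for $n-1\ge 5$ is established in the paper) are all correct, and the star case coincides with the paper's treatment. The genuine gap is exactly where you place it: the non-star case for moderate $n$ — and your plan for closing it does not work as stated. The computations you invoke certify the values of $\alpha[J_{\pm}(n,3,0)]$ only up to $n=9$; they say nothing about independent sets for $10 \le n \lesssim 80$, which is precisely the range your weighted count cannot reach. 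Quantitatively: in the $2$-transversal case the set $C$ gives $|F|\le \binom{5}{2}(n-2)$, so the crude bound $\sum_e w(e)\le 8|F| \le 80(n-2)$ falls below $2\binom{n-1}{2}=(n-1)(n-2)$ only for $n\ge 81$; in the no-$2$-transversal case, $|F|\le 27$ gives $|I|\le 216$, which suffices only for $n\ge 16$. The heavy-pair savings are never converted into an inequality covering these windows, and they cannot be in a purely local way, because an intersecting family can be simple (pair-free) — e.g.\ the Fano plane — in which case $w(e)=8$ for every support and your weighting saves nothing; also note that the set $C$ exists only in the $2$-transversal case, so it cannot be used to bound $|F|$ when there is no small transversal.

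The paper bridges exactly this window with ingredients your proposal lacks: for $n=10$ without a $2$-transversal it uses Frankl's refinement of the Erd\H{o}s--Lov\'asz constant from $27$ to $10$, then Deza's theorem to produce two supports meeting in two places, and the computed value $\alpha[J_{\pm}(4,3,0)]=8$ to cut $80$ down to $72$; for the $2$-transversal-but-no-$1$-transversal case it uses a separate computer bound (at most $48$ vertices at $n=10$ under that structural condition) and then a Katona-type averaging over all $10$-element sets of places containing $\{a,b\}$, namely $\binom{n-3}{7}|I_{ab}|+\binom{n-4}{6}(|I_a|+|I_b|)\le 48\binom{n-2}{8}$, to propagate the $n=10$ bound to every $n>10$. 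Without either that averaging step or computations with structural side conditions going beyond $n=9$, your argument as written proves the proposition only for $n=9$ and for $n$ roughly $\ge 81$.
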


\begin{proof}
The example is inherited from Theorem~\ref{0}.

Let us proceed with the upper bound. For the case $n = 9$ the computer calculations give us the desired result.
Let us repeat the proof of Theorem~\ref{0}, updating it for small values of $n$.
Let $I$ be a maximal independent set in $G := J_{\pm}(n,3,0)$.

Clearly supports of vertices of $I$ form a 3-uniform intersecting family.
Theorem~\ref{EL} states that an intersecting family either contains at most 27 sets or has a 2-transversal.
It is known~\cite{frankl1996covers} that the constant 27 can be refined to 10. 

In the first case the family of supports has no 2-transversal. Then $|I| \leq 8\cdot 10$, which is enough for $n > 10$.
Assume the contrary to the statement in the case $n = 10$, id est $|I| > 72$. It implies that vertices in $I$ have exactly $10$ different supports.
Suppose that every pair of supports shares exactly one vertex.
Then by Theorem~\ref{DezaThm} all the supports have one common vertex, so at least $1 + 2\cdot 10 > 10$ coordinates are required.
Thus there are supports $f_1$, $f_2$ such that $|f_1\cap f_2| = 2$.
The initial graph $G$ has 16 vertices with supports $f_1$ and $f_2$; by the equality $\alpha[J_\pm(4,3,0)] = 8$, 
$I$ has at least 8 missing vertices on these supports. 
This refines the bound $|I| \leq 80$ to the desired $|I| \leq 72 = 2\binom{9}{2}$.

In the second case we have a one-point transversal set, say $U = \{u\}$.
Let $I_{sign}$ be a set of vertices from $I$ containing $u^{sign}$, where $sign \in \{+,-\}$. 
Clearly $|I| = |I_+| + |I_-|$.
After removing coordinate $u$ from every vertex, $I_+$ becomes an independent set in $J_\pm(n-1,2,-1)$.
By the Subsection~\ref{k=2} $|I_+| \leq \binom{n-1}{2}$. The same bound for $I_-$ finishes the proof in this case.

In the last case we have a transversal set of size 2, say $\{a,b\}$.
Let $I_a$ be the set of vertices of $I$ containing $a$ and not containing $b$, $I_b$ is defined analogously. 
Both $I_a$ and $I_b$ are nonempty, otherwise there is a one-point transversal set which is the previous case.
Define $I_{ab} = I \setminus I_a \setminus I_b$.
Computer calculations show that for $n = 10$ we have at most 48 vertices in an independent set with such conditions.

Let $n$ be greater than 10; for every set $A \subset [n]$, such that $|A|=10$ and $a,b \in A$, we have $\alpha (G[A]) \leq 48$ (here $G[A]$ stands for the subgraph of $G$ containing all the vertices $v$ such that $\supp v \subset A$). Define $I[A]$ as the set of vertices $i$ from $I$ such that $\supp i \subset A$; note that $I[A]$ is an independent set.
Every vertex from $I_{ab}$ belongs to $\binom{n-3}{7}$ different $A$, every vertex from $I_a \cup I_b$ belongs to $\binom{n-4}{6}$ different $A$.
Summing up inequalities $|I[A]| \leq \alpha(G[A]) \leq 48$ over all choices of $A$ we got
\[
\binom{n-3}{7} |I_{ab}| + \binom{n-4}{6} (|I_a| + |I_b|) \leq 48 \binom{n-2}{8}
\]
which is equivalent to
\[
\frac{n-3}{7} |I_{ab}| + (|I_a| + |I_b|) \leq \frac{48}{56} (n-2)(n-3).
\]
Finally,
\[
|I| = |I_{ab}| + |I_a| + |I_b| \leq \frac{n-3}{7}|I_{ab}| + (|I_a| + |I_b|) \leq \frac{48}{56} (n-2)(n-3) < 2 \binom{n-1}{2}.
\]
\end{proof}

\subsection{The case \texorpdfstring{$k=3$, $t=-2$}{k = 3, t = -2}}

Example~\ref{someex} gives us a lower bound $\alpha[J_\pm(n, 3, -2)] \geq 2\binom{n}{2} + 2$.
Note that the Katona averaging method does not give exact result because of the additional term of a smaller order of growth.

First, note that Theorem~\ref{approx} in this case gives the bound
\[
\alpha[J_\pm(n, 3, -2)] \leq 2\binom{n}{3} + 8\binom{n}{2}.
\]
Indeed, let $I$ be an independent set in $J_{\pm}(n, 3, -2)$. We call a vertex $v\in I$ \emph{bad} if there is another vertex with the same support which differs in exactly two places. Otherwise we call a vertex \emph{good}. From Theorem~\ref{Kl} there are at most $2\binom{n}{3}$ good vertices.

Let us show that the number of bad vertices is at most $8\binom{n}{2}$.
Indeed, each bad vertex has a pair of signplaces such that antipodal pair of signplaces contained in another vertex. But then all vertices containing one of these two pairs of signplaces must have the same third place therefore there are at most $8 \binom{n}{2}$ bad vertices.
\qed

Using more accurate double counting we can prove the following upper bound.

\begin{proposition}
For $n \geq 6$ we have
\[
\alpha[J_\pm(n, 3, -2)] \leq 2\binom{n}{3} + \frac{8}{3} \binom{n}{2}.
\]
\end{proposition}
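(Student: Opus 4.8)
The plan is to bound $|I|$ by a single double count over supports, extracting from each support both its main contribution (of size $2$) and a lower-order correction controlled by a global budget of size $2\binom{n}{2}$.

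First I would record the shape of the edges. Two vertices of $J_\pm(n,3,-2)$ are adjacent exactly when their supports meet in two places and carry opposite signs on both of them, the third places being distinct; in particular two vertices sharing the same support are never adjacent, since on a common support the scalar product is a sum of three $\pm 1$'s, hence odd. Thus an independent set $I$ is described support by support: for a support $S$ let $\mathcal A_S \subseteq \{+,-\}^S$ collect the sign patterns of the vertices of $I$ with support $S$, so that $|I| = \sum_S |\mathcal A_S|$ and the only genuine constraints are \emph{between} supports.

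Next I would isolate the single cross-support constraint. For a pair of places $\{x,y\}$ split the four sign patterns on $\{x,y\}$ into the two antipodal classes $\{{+}{+},{-}{-}\}$ and $\{{+}{-},{-}{+}\}$. Call a pair-class $(\{x,y\},\kappa)$ \emph{occupied by $S$} if $\{x,y\}\subset S$ and $\mathcal A_S$ contains two patterns antipodal on $\{x,y\}$ and lying in class $\kappa$. If the same pair-class were occupied by two different supports $S=\{x,y,c\}$ and $S'=\{x,y,c'\}$, then the two opposite $\{x,y\}$-patterns, one drawn from each support, give vertices sharing exactly $\{x,y\}$ with opposite signs there, hence an edge — a contradiction. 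So every pair-class is occupied by at most one support, and writing $\mathrm{occ}(S)$ for the number of pair-classes occupied by $S$ we obtain the global budget $\sum_S \mathrm{occ}(S) \le 2\binom{n}{2}$.

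The heart of the argument is then the purely local inequality
\[
|\mathcal A_S| \le 2 + \tfrac{4}{3}\,\mathrm{occ}(S)\qquad\text{for every support }S,
\]
after which summing over all $\binom{n}{3}$ supports and inserting the budget yields $|I| \le 2\binom{n}{3} + \tfrac{4}{3}\cdot 2\binom{n}{2}$, which is the claim. To prove the local inequality I would verify it on subsets $\mathcal A \subseteq \{+,-\}^3$ directly: for each pair of coordinates $P$ let $a_P=|\pi_P(\mathcal A)|$ be the size of the projection of $\mathcal A$ onto $P$, and note that the number of fully appearing classes for $P$ is at least $\max(a_P-2,0)$ (a projection of size $3$ already contains a full antipodal class, one of size $4$ contains both), while $|\mathcal A|$ is controlled by the Loomis--Whitney/Shearer inequality $|\mathcal A|^2 \le a_{xy}\,a_{xz}\,a_{yz}$. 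For $|\mathcal A|\ge 4$ the product is forced to be large, which forces several occupied classes and makes the inequality comfortable; the only tight instances are supports carrying exactly two patterns at Hamming distance $\le 1$ (so $|\mathcal A_S|=2$, $\mathrm{occ}(S)=0$), which is precisely the extremal family behind the main term $2\binom{n}{3}$.

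I expect the main obstacle to be pinning down the constant $\tfrac43$, i.e. showing that $|\mathcal A|-\tfrac43\,\mathrm{occ}(\mathcal A)$ never exceeds $2$. The crude accounting (each occupied class carries at most $4$ vertices, and there are at most $2\binom{n}{2}$ classes) only yields the factor $8$ of the previous bound; the improvement to $\tfrac83$ comes precisely from charging the \emph{whole} support against \emph{all} the classes it occupies, so that a heavily loaded support — which has many vertices at pairwise Hamming distance $2$ — is also seen to occupy proportionally many pair-classes. The Loomis--Whitney estimate quantifies this trade-off for dense $\mathcal A$; the small cases $|\mathcal A|\le 3$, where the projection-size estimate is lossy (for instance the profile $a_{xy}=a_{xz}=a_{yz}=2$), I would simply check by hand, since each of them forces $|\mathcal A|\le 3$ and hence stays safely below the bound.
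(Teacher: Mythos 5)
Your proposal is correct, and it takes a genuinely different route from the paper's proof. The paper also double counts over pairs of places, but in a dual way: it calls two vertices with the same support that differ in exactly two places \emph{tangled}, assigns to each incidence of a vertex $v\in I$ with a pair of places $\{i,j\}\subset \supp v$ a weight $c_I(v,i,j)\in\{1,2,\tfrac12\}$ according to whether $v$ has a tangled partner and where they differ, observes that every vertex carries total weight at least $3$, and then shows by a three-case analysis that the total weight $d_{i,j}$ landing on any fixed pair of places is at most $2n+4$; summing over pairs gives $3|I|\le \binom{n}{2}(2n+4)$. You instead decompose $I$ by supports and split the bound into a global budget (each of the $2\binom{n}{2}$ antipodal pair-classes is occupied by at most one support --- which is exactly the structural fact driving the paper's case analysis of $d_{i,j}$) plus the local inequality $|\mathcal A_S|\le 2+\tfrac43\,\mathrm{occ}(S)$ on the $3$-cube. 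That local inequality does hold: for $|\mathcal A_S|\ge 5$ all three projections have size at least $3$, so $\mathrm{occ}\ge 3$; for $|\mathcal A_S|=4$ Loomis--Whitney forces either a projection of size $4$ or two projections of size at least $3$, so $\mathrm{occ}\ge 2$; and for $|\mathcal A_S|=3$ Loomis--Whitney ($9>2\cdot 2\cdot 2$) forces a projection of size at least $3$, so $\mathrm{occ}\ge 1$. Be careful with your last remark, though: the case $|\mathcal A_S|=3$ is not ``safely below the bound'' --- a $3$-element $\mathcal A_S$ with $\mathrm{occ}(S)=0$ would violate your inequality, so the verification that this cannot occur (equivalently, that three vertices of the cube cannot span edges in all three coordinate directions, the cube graph being triangle-free) is genuinely needed, although immediate. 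Comparing the two arguments: yours cleanly separates the local problem from the global counting, makes transparent where the term $\tfrac83\binom{n}{2}=\tfrac43\cdot 2\binom{n}{2}$ comes from, and identifies the extremal local configurations (two patterns at Hamming distance at most $1$ per support), at the price of invoking a projection inequality; the paper's weighting scheme is more elementary and self-contained, but its fractional weight $\tfrac12$ and the column bound $2n+4$ are more ad hoc and harder to motivate.
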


\begin{proof}
A pair of vertices $v, w \in I$ is called \emph{tangled} if these vertices have the same support and differ exactly at two places.
Define the \emph{weight} $c_I(v, i, j)$, where $v\in I$ and $i,j\in v$, in the following way:
\[
c_I(v, i, j) = \begin{cases}
1, \mbox{ if $v$ does not have tangled vertices in $G$}, \\
2, \mbox{ if $v$ has a tangled vertex in $G$ which differs at places $i,j$}, \\
0.5, \mbox{ otherwise}.
\end{cases}
\]
Note that for a vertex $v$ sum of corresponding weights is at least 3.
Let $d_{i,j}$ be the sum of weight of vertices containing places $i$ and $j$ and let us estimate an upper bound for $d_{i, j}$.
Then there are three cases which depend on whether there are tangled vertices containing places $i, j$ and whether these vertices have antipodal signs on places $i, j$.

In the first case there are no tangled vertices in $I$ which differ in places $i, j$. Then for any place $l$ the total weight of vertices with support $\{i, j, l\}$ is at most 2. Then $d_{i,j} \leq 2(n-2)$.
In the second case there are tangled vertices in $I$ which contain all four pairs of signplaces on places $i, j$. Then there are at most 8 vertices containing these places and $d_{i,j} \leq 16$.

In the last case there are two vertices in $I$ which are antipodal on places $i, j$ and there are no vertices in $I$ which contain one of the pairs of signplaces on places $i, j$. Then there are at most 4 vertices which differ in places $i, j$ and their total weight is at most $8$. The rest of vertices containing places $i, j$ have the same signs on these places therefore their total weight is at most $2(n-2)$.

Therefore, $d_{i,j} \leq 2n + 4$ and 
\[
3|I| \leq \sum_{1\leq i < j \leq n} d_{i,j} \leq \binom{n}{2} (2n+4) = 6\binom{n}{3} + 8\binom{n}{2}.
\]
\end{proof}

\section{Open questions}
\label{open}

It seems very challenging to find a general method providing the independence number of $J_{\pm} (n,k,t)$.
Here we discuss questions that seem for us both interesting and relatively easy.

\paragraph{Small values of the parameters.} The smallest interesting case is $J_{\pm}(n,3,-2)$. We hope that for $n > n_0$ Example~\ref{someex} is the best possible, i.e. 
\[
\alpha[J_{\pm}(n,3,-2)] = \alpha[K_{\pm}(n,3,-2)] = 2\binom{n}{3} + 2.
\]
Recall that the last equality is established by Theorem~\ref{FK_Kneser}.

Another small case leads to the following conjecture.
\begin{conjecture}
Let $n > n_0$ be an even number. Then
\[
\alpha [J_{\pm}(n,4,1)] = 2n(n-2).
\]
\end{conjecture}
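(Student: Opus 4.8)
The plan is to prove the lower and upper bounds separately, the upper bound being the substantial part. For the lower bound, use that $n$ is even: fix a perfect matching $P_1,\dots,P_{n/2}$ of $[n]$ into pairs, take as supports all $\binom{n/2}{2}$ unions $P_i\cup P_j$, and on each such support include all $2^4=16$ sign patterns. Any two of these supports meet in $0$ or $2$ places, so the scalar product of any two of the chosen vectors is a sum of an even number of terms $\pm1$, hence even and never equal to $1$; the set is independent and has size $16\binom{n/2}{2}=2n(n-2)$. Before attacking the upper bound I would record the edge structure once and for all: two vertices are adjacent only when their supports meet in an odd number $j\in\{1,3\}$ of places, and then precisely when the signed overlap equals $1$ --- for $j=1$ the single common signplace must agree, and for $j=3$ exactly two of the three common signs must agree. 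In particular vertices on a common support, or on supports meeting in $0$ or $2$ places, are never adjacent.

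For the upper bound, let $I$ be independent, let $\mathcal F$ be the family of supports occurring in $I$, and for $S\in\mathcal F$ let $A_S\subseteq\{\pm1\}^4$ be the patterns used, so that $|I|=\sum_{S\in\mathcal F}|A_S|$. Call $S$ \emph{full} if $|A_S|=16$. By the edge analysis a full support must meet every other member of $\mathcal F$ evenly: if it met some $S'$ in $1$ or $3$ places, the presence of both signs on the shared coordinates would force an edge. Hence the full supports form a family of $4$-sets with pairwise intersections in $\{0,2\}$, and the first and main goal is the extremal-set statement that, for $n>n_0$, such a family has at most $\binom{n/2}{2}$ members. The available local input is that two even-intersecting $4$-sets sharing a pair meet in exactly that pair, so the completions of any fixed pair are pairwise disjoint; this bounds the number of sets through a fixed pair by $(n-2)/2$, and since each set meets only $O(n)$ others in two points while the sets disjoint from it lie on the remaining $n-4$ coordinates, a recursion on $n$ already yields an $O(n^2)$ bound. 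Pushing this down to the sharp value $\binom{n/2}{2}$ is the crux.

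It then remains to control supports with $|A_S|\le 15$ and their interaction with the full ones. Here I would exploit the overlap-$1$ condition, which is the genuine bottleneck: for a fixed coordinate $c$, any two supports meeting only in $\{c\}$ must use disjoint sign-sets on $c$, so every sunflower through $c$ with core exactly $\{c\}$ has at most two petals in $\mathcal F$, carrying opposite signs and never split on $c$. This is a $2$-colourability constraint that sharply limits how many supports can pass through a coordinate with sign freedom, and combined with a double-counting over pairs of coordinates it should bound the partial contribution. Following the treatment of the $k=3$ cases, I would finish by reducing --- through Lemma~\ref{Katona} and summation over embeddings --- to a few bounded sub-configurations whose independence numbers are computed by {\"O}sterg{\aa}rd's algorithm.

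The hard part is the extremal bound on even-intersecting $4$-families for large $n$, and it is genuinely necessary to assume $n$ large: at $n=8$ the $14$ blocks of the Steiner system $S(3,4,8)$ (equivalently, the weight-$4$ words of the $[8,4,4]$ Hamming code) are pairwise even-intersecting, so carrying all $16$ signs on each already yields $16\cdot14=224>2\cdot8\cdot6$ independent vertices and the formula fails. Excluding such quasi-symmetric configurations --- a $2$-$(n,4,\lambda)$ design with block intersections in $\{0,2\}$ --- and thereby showing the perfect-matching family is optimal once $n>n_0$ is both the decisive step and the reason a threshold $n_0$ cannot be removed.
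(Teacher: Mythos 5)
This statement is a \emph{conjecture} in the paper: the authors supply only the lower-bound construction (matching the $n$ places into pairs $p_1,\dots,p_{n/2}$ and taking all vertices on supports $p_i\cup p_j$), and no upper-bound proof exists in the text. Your lower bound is exactly that construction, correctly counted ($16\binom{n/2}{2}=2n(n-2)$), and your edge analysis (adjacency only across supports meeting in $1$ or $3$ places) is right. Your observation that the conjecture fails at $n=8$ via the $14$ blocks of the Steiner system $S(3,4,8)$, which are pairwise even-intersecting and hence give $224>96$ independent vertices, is also correct and is a genuine addition: it shows the threshold $n_0$ is not an artifact of proof technique.

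However, as a proof the proposal has a genuine gap, which you yourself flag: the upper bound is a program, not an argument. Two essential steps are missing. First, the extremal claim that for $n>n_0$ a family of $4$-sets with pairwise intersections in $\{0,2\}$ has at most $\binom{n/2}{2}$ members is exactly the crux, and nothing in the sketch establishes it; the recursion you outline only gives $O(n^2)$ with the wrong constant, and closing the gap between $O(n^2)$ and the sharp $n^2/8+O(n)$ is where all the difficulty lives. (Note the reformulation over $\mathbb{F}_2$: such a family consists of pairwise orthogonal even-weight vectors, so it spans a self-orthogonal code of dimension at most $n/2$; the question becomes the maximum number of weight-$4$ words in such a code, and the $n=8$ Hamming-code example shows local configurations can beat the pairs construction, so a purely local or counting argument cannot suffice.) Second, the interaction between full supports and supports carrying at most $15$ sign patterns is handled only heuristically (``should bound the partial contribution''); since a non-full support may meet other supports oddly, one must show that the loss of sign patterns outweighs the gain in supports, and no inequality to that effect is formulated. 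So the proposal correctly identifies the structure of the problem but does not prove the statement --- which is consistent with its status in the paper as an open conjecture.
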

Obviously $\alpha [J_{\pm}(n,4,1)] \geq \alpha [J_{\pm}(n,4,odd)] = 2n(n-2)$ (see Proposition~\ref{prop_steiner}).

\paragraph{Chromatic numbers.}  
Usually finding or evaluating the chromatic number is a more complicated problem than finding or evaluating the independence number.
In particular Lov{\' a}sz~\cite{lovasz1978kneser} proved Kneser conjecture on the chromatic number of $K(n,k,0)$ 17 year after 
Erd{\H o}s, Ko and Rado determined the independence number of this graph.

Looks like the simplest case is $J_{\pm}(n,k,0)$, since Theorem~\ref{0} gives precise value of the independence number for large $n$ and Corollary~\ref{cor0} provides a stability-type result.
It turns out that for $k \leq 3$ the chromatic number is linear in $n$ (the precise value is also of interest).
Unfortunately, for $k \geq 4$ we can only say that
\[
c(k)n \leq \frac{|V[J_{\pm} (n,k,0)]|}{\alpha[J_{\pm} (n,k,0)]} \leq  \chi [J_{\pm} (n,k,0)] \leq  \frac{|V[J_{\pm} (n,k,0)]|}{\alpha[J_{\pm} (n,k,0)]} \log |V[J_{\pm} (n,k,0)]| \leq C(K) n \log n
\]
for some positive constants $c(k)$, $C(k)$.
The last inequality holds since $J_{\pm} (n,k,0)$ is a vertex-transitive graph (see~\cite{lovasz1975ratio}).

\paragraph{Difference between $J_{\pm}(n,k,t)$ and $K_{\pm}(n,k,t)$.}
It turns out that for a negative odd $t$ Theorems~\ref{FK_Kneser} and~\ref{bsimple} give
\[
\alpha[J_{\pm}(n,k,t)] = \alpha[K_{\pm}(n,k,t)].
\]
Does it hold for all negative $t$? Do we have 
\[
\chi[J_{\pm}(n,k,t)] = \chi[K_{\pm}(n,k,t)]
\]
in this case? 

The general comparison of the behavior of independence numbers and chromatic numbers of these graphs is also of interest.

\paragraph{Acknowledgments.} We thank A.~Kupavskii for the patient explanation of his results.
We thank G.~Kabatiansky for pointing out the notion of Reed--Solomon codes.
We are grateful to J.~Balogh, G.O.H.~Katona, F.~Petrov and A.~Raigorodskii for their attention to our work.
This research is supported by the Russian Science Foundation under grant 16-11-10039.

\bibliographystyle{plain}
\bibliography{main}

\end{document}